\newtheorem{theorem}{Theorem}[section]
\newtheorem{definition}[theorem]{Definition}
\newtheorem{lemma}[theorem]{Lemma}
\newtheorem{prop}[theorem]{Proposition}
\newtheorem{cor}[theorem]{Corollary}
\theoremstyle{remark}
\newtheorem{example}[theorem]{Example}
\newtheorem{remark}[theorem]{Remark}
\theoremstyle{question}
\def\N{{\mathbb N}}
\def\C{{\mathbb C}}
\def\R{{\mathbb R}}
\def\TT{{\mathbb T}}
\def\Z{{\mathbb Z}}
\def\A{{\mathcal{A}}}
\def\K{{\mathcal{K}}}
\def\L{{\mathcal{L}}}
\def\T{{\mathcal{T}}}
\def\I{{\mathcal{I}}}
\newcommand{\id}{\operatorname{id}}
\newcommand{\piso}{\operatorname{piso}}
\newcommand{\iso}{\operatorname{iso}}
\newcommand{\End}{\operatorname{End}}
\newcommand{\Aut}{\operatorname{Aut}}
\newcommand{\whitesquare}{\hfill $\whitesquare$\newline\vspace{0.4cm}}
\def\newspan{\operatorname{span}}
\numberwithin{equation}{section}
\begin{document}

\title[The partial-isometric crossed products as full corners]{The partial-isometric crossed products by semigroups of endomorphisms as full corners}
\author[Sriwulan Adji]{Sriwulan Adji}
\address{Current affiliation: Institute of Mathematical Sciences, University Malaya, Kuala Lumpur (Malaysia)}
\email{sriwulan.adji@yahoo.com}

\author[Saeid Zahmatkesh]{Saeid Zahmatkesh}
\address{School of Mathematical Sciences, Universiti Sains Malaysia, Penang (Malaysia)}
\email{zahmatkesh$_{-}$s@yahoo.com}

\date{\today}

\begin{abstract}
Suppose $\Gamma^{+}$ is the positive cone of a totally ordered abelian group $\Gamma$, and $(A,\Gamma^{+},\alpha)$ is a system consisting of a $C^*$-algebra $A$, an action $\alpha$ of $\Gamma^{+}$ by extendible endomorphisms of $A$.
We prove that the partial-isometric crossed product $A\times_{\alpha}^{\piso}\Gamma^{+}$ is a full corner in the subalgebra of
$\L(\ell^{2}(\Gamma^{+},A))$, and that if $\alpha$ is an action by automorphisms of $A$, then
it is the isometric-crossed product $(B_{\Gamma^{+}}\otimes A)\times^{\iso}\Gamma^{+}$, which is therefore a full corner in the usual crossed product of  system by a group of automorphisms.
We use these realizations to identify the ideal of $A\times_{\alpha}^{\piso}\Gamma^{+}$ such that the quotient is the isometric crossed product $A\times_{\alpha}^{\iso}\Gamma^{+}$.
\\
\medskip
\\
Keywords: $C^*$-algebra, automorphism, endomorphism, semigroup, partial isometry, crossed product.
\\
MSC(2010):46L55
\end{abstract}

\maketitle

\section{Introduction}
Let $\Gamma$ be a totally ordered abelian group, and  $\Gamma^{+}:=\{x\in\Gamma : x \ge 0\}$  the positive cone of $\Gamma$.
A dynamical system $(A,\Gamma^{+},\alpha)$ is a system consisting of a $C^*$-algebra $A$, an action
$\alpha:\Gamma^{+}\rightarrow \End{A}$ of $\Gamma^{+}$ by endomorphisms $\alpha_{x}$ of $A$ such that $\alpha_{0}={\rm id}_{A}$.
Since we do not require the algebra $A$ to have an identity element, we need to assume that every endomorphism $\alpha_{x}$ extends to a strictly continuous endomorphism $\overline{\alpha}_{x}$ of the multiplier algebra $M(A)$ as it is used in \cite{Adji1,Larsen}, and note that extendibility of $\alpha_{x}$ may imply ${\alpha}_{x}(1_{M(A)})\neq 1_{M(A)}$.

A partial-isometric covariant representation, the analogue of isometric covariant representation, of the system $(A,\Gamma^{+},\alpha)$ is defined in \cite{LR}  where the endomorphisms $\alpha_{s}$ are represented by partial-isometries instead of isometries.
The partial-isometric crossed product $A\times_{\alpha}^{\piso}\Gamma^{+}$ is defined in there as the Toeplitz algebra studied in \cite{F} associated to a product system of Hilbert bimodules arises from the underlying dynamical system $(A,\Gamma^{+},\alpha)$.
This algebra is universal for covariant partial-isometric representations of the system.

The success of the theory of isometric crossed products \cite{Adji2,Adji,ALNR,murphy,murphy2,murphy3}
has led authors in \cite{LR} to study the structure of partial-isometric crossed product of the distinguished system $(B_{\Gamma^{+}},\Gamma^{+},\tau)$, where $\tau_{x}$  acts on the subalgebra $B_{\Gamma^{+}}$ of $\ell^{\infty}(\Gamma^{+})$ as the right translation.
However the analogous view of isometric crossed products as full corners in crossed products by groups \cite{Adji1,Laca,stacey} for partial-isometric crossed products remains unavailable.
This is the main task undertaken in the present work.

We construct a covariant partial-isometric representation of $(A,\Gamma^{+},\alpha)$ in the $C^{*}$-algebra $\L(\ell^{2}(\Gamma^{+},A))$ of adjointable operators on the Hilbert $A$-module $\ell^{2}(\Gamma^{+},A)$, and we show
the corresponding representation of the crossed product is an isomorphism of $A\times_{\alpha}^{\piso}\Gamma^{+}$ onto a full corner in the subalgebra of $\L(\ell^{2}(\Gamma^{+},A))$.
We use the idea from \cite{KS} for the construction: the embedding $\pi_{\alpha}$ of $A$ into $\L(\ell^{2}(\Gamma^{+},A))$, together with the isometric representation $S:\Gamma^{+}\rightarrow \L(\ell^{2}(\Gamma^{+},A))$, satisfy the equation $\pi_{\alpha}(a)S_{x}=S_{x}\pi(\alpha_{x}(a))$ for all $a\in A$ and $x\in\Gamma^{+}$, and then the algebra $\T_{(A,\Gamma^{+},\alpha)}$ generated by $\pi(A)$ and $S(\Gamma^{+})$ contains $A\times_{\alpha}^{\piso}\Gamma^{+}$ as a full corner.
However since the results in \cite{KS} are developed to compute and to show that $KK$-groups of $\T_{(A,\Gamma^{+},\alpha)}$ and $A$ are equivalent, the theory is set for unital $C^*$-algebras and unital endomorphisms: if the algebra is not unital, they use the smallest unitization algebra $\tilde{A}$ and then the extension of endomorphism on $\tilde{A}$ is unital.

Here we use the (largest unitization) multiplier algebra $M(A)$ of $A$, and every endomorphism is extendible to $M(A)$.
So we generalize the arguments in \cite{KS} to the context of multiplier algebra.
When endomorphisms in a given system are unital, then we are in the context of \cite{KS} which therefore the $C^*$-algebra $A\times_{\alpha}^{\piso}\Gamma^{+}$ enjoys all properties of the algebra $\T_{(A,\Gamma^{+},\alpha)}$ described in \cite{KS}.
Moreover if the action is automorphic action then we show that $A\times_{\alpha}^{\piso}\Gamma^{+}$ is a full corner in the crossed product by group action.

Using the corner realization of $A\times_{\alpha}^{\piso}\Gamma^{+}$, we identify the kernel  of the natural surjective homomorphism
$i_{A}\times i_{\Gamma^{+}}: A\times_{\alpha}^{\piso}\Gamma^{+} \rightarrow A\times_{\alpha}^{\iso}\Gamma^{+}$ induced by the canonical isometric covariant pair $(i_{A},i_{\Gamma^{+}})$ of $(A,\Gamma^{+},\alpha)$, and to get the exact sequence of \cite{KS} and the Pimsner-Voiculescu exact sequence in \cite{PV}.

We begin the paper with a preliminary section containing background material about partial-isometric and isometric crossed products, and then identify the spanning elements of the kernel of the natural homomorphism from partial isometric crossed product onto the isometric crossed product of a system $(A,\Gamma^{+},\alpha)$.
In Section 3, we construct a covariant partial-isometric representation of $(A,\Gamma^{+},\alpha)$ in $\L(\ell^{2}(\Gamma^{+},A))$ for which it gives an isomorphism of $A\times_{\alpha}^{\piso}\Gamma^{+}$ onto a full corner of the subalgebra of $\L(\ell^{2}(\Gamma^{+},A))$.
In Section 4, we show that when the semigroup $\Gamma^{+}$ is $\N$ then the kernel of that natural homomorphism is a full corner in the algebra of compact operators on $\ell^{2}(\N,A)$.
We discuss in Section 5, the theory of partial-isometric crossed products for systems by automorphic actions of the semigroups $\Gamma^{+}$.
We show that $A\times_{\alpha}^{\piso}\Gamma^{+}$ is a full corner in the classical crossed product $(B_{\Gamma}\otimes A)\times \Gamma$ of a dynamical system by a group of automorphisms.

\section{Preliminaries}
A \emph{partial isometry} $V$ on a Hilbert space $H$ is an operator which satisfies $\|Vh\|=\|h\|$ for all $h\in (\ker V)^{\perp}$.
A bounded operator $V$ is a partial isometry if and only if $VV^{*}V=V$, and then the adjoint $V^{*}$ is a partial isometry too.
Furthermore the two operators $V^{*}V$ and $VV^{*}$ are the orthogonal projections
on the initial space $(\ker V)^{\perp}$ and the range $VH$ respectively.
So for an element $v$ of a $C^*$-algebra $A$ is called a partial isometry if $vv^{*}v=v$.

A \emph{partial-isometric representation} of $\Gamma^{+}$ on a Hilbert space $H$ is a map $V:\Gamma^{+}\rightarrow B(H)$ such that $V_{s}:=V(s)$
is a partial isometry and $V_{s}V_{t}=V_{s+t}$ for every $s,t\in\Gamma^{+}$.
The product $ST$ of two partial isometries $S$ and $T$ is not always a partial isometry, unless $S^{*}S$ commutes with
$TT^{*}$  (Proposition 2.1 of \cite{LR}).
A partial isometry $S$ is called a power partial isometry if $S^{n}$ is a partial isometry for every $n\in\N$.
So a partial isometric representation of $\N$ is determined by a single power partial isometry $V_{1}$ because $V_{n}=V_{1}^{n}$.
Proposition 3.2 of \cite{LR} says that if $V$ is a partial-isometric representation of $\Gamma^{+}$, then every $V_{s}$ is a power partial isometry, and
$V_{s}V_{s}^{*}$ commutes with $V_{t}V_{t}^{*}$,  $V_{s}^{*}V_{s}$ commutes with $V_{t}^{*}V_{t}$.

A \emph{covariant partial-isometric representation} of $(A,\Gamma^{+},\alpha)$ on a Hilbert space $H$
is a pair $(\pi,V)$ consisting of a non-degenerate representation $\pi:A\rightarrow B(H)$ and a partial-isometric representation $V:\Gamma^{+}\rightarrow B(H)$
which satisfies
\begin{equation}\label{cov}
\pi(\alpha_{s}(a))=V_{s}\pi(a)V_{s}^{*} \quad \mbox{and} \quad V_{s}^{*}V_{s}\pi(a)=\pi(a) V_{s}^{*}V_{s} \quad \mbox{ for } s\in\Gamma^{+}, a\in A.
\end{equation}
Every covariant representation $(\pi,V)$ of $(A,\Gamma^{+},\alpha)$ extends to a covariant representation
$(\overline{\pi},V)$ of $(M(A),\Gamma^{+},\overline{\alpha})$.
Lemma 4.3 of \cite{LR} shows that $(\pi,V)$ is a covariant representation of $(A,\Gamma^{+},\alpha)$ if and only if
\[ \pi(\alpha_{s}(a))V_{s}=V_{s}\pi(a) \quad \mbox{and} \quad V_{s}V_{s}^{*} =\overline{\pi}(\overline{\alpha}_{s}(1)) \quad \mbox{ for } s\in\Gamma^{+}, a\in A.\]

Every system $(A,\Gamma^{+},\alpha)$ admits a nontrivial covariant partial-isometric representation \cite[Example 4.6]{LR}.

\begin{definition}\label{def}
A partial-isometric crossed product of $(A,\Gamma^{+},\alpha)$ is a triple $(B,i_{A},i_{\Gamma^{+}})$ consisting of a $C^{*}$-algebra $B$,
a non-degenerate homomorphism $i_{A}:A\rightarrow B$, and a partial-isometric representation $i_{\Gamma^{+}}:\Gamma^{+}\rightarrow M(B)$  such that
\begin{itemize}
\item[(i)] the pair $(i_{A},i_{\Gamma^{+}})$ is a covariant representation of $(A,\Gamma^{+},\alpha)$ in $B$;
\item[(ii)] for every covariant partial-isometric representation $(\pi,V)$ of $(A,\Gamma^{+},\alpha)$ on a Hilbert space $H$ there is a
non-degenerate representation $\pi\times V$ of $B$ on $H$ which satisfies $(\pi\times V)\circ i_{A}=\pi$ and $\overline{(\pi\times V)}\circ i_{\Gamma^{+}}=V$;
and
\item[(iii)] the $C^*$-algebra $B$ is spanned by $\{i_{\Gamma^{+}}(s)^{*}i_{A}(a)i_{\Gamma^{+}}(t): a\in A, s,t \in\Gamma^{+}\}$.
\end{itemize}
\end{definition}

\begin{remark}
Proposition 4.7 of \cite{LR} shows that such $(B,i_{A},i_{\Gamma^{+}})$ always exists, and it is unique up to isomorphism:
if $(C,j_{A},j_{\Gamma^{+}})$ is a triple that satisfies all properties (i),(ii) and (iii) then there is an isomorphism of $B$ onto $C$ which carries
$(i_{A},i_{\Gamma^{+}})$ into $(j_{A},j_{\Gamma^{+}})$.

We use the standard notation $A\times_{\alpha}\Gamma^{+}$ for the crossed product of $(A,\Gamma^{+},\alpha)$,
and we write $A\times_{\alpha}^{\piso}\Gamma^{+}$ if we want to distinguish it with the other kind of crossed product.

Theorem 4.8 of \cite{LR} asserts that
a covariant representation $(\pi,V)$ of $(A,\Gamma^{+},\alpha)$ on $H$ induces a faithful representation $\pi\times V$ of $A\times_{\alpha}\Gamma^{+}$ if and only if $\pi$ is faithful on $(V_{s}^{*}H)^{\perp}$ for all $s>0$, and this condition is equivalent to say that $\pi$ is faithful on the range of $(1-V_{s}^{*}V_{s})$ for all $s>0$.
\end{remark}

\subsubsection{Isometric crossed products}
The above definition of partial-isometric crossed product is analogous to the one for isometric crossed product: the endomorphisms $\alpha_{s}$ are implemented by partial isometries instead of isometries.

We recall that an \emph{isometric representation} $V$ of $\Gamma^{+}$ on a Hilbert space $H$ is a homomorphism $V:\Gamma^{+}\rightarrow B(H)$ such that each $V_{s}$ is an isometry and $V_{s+t}=V_{s}V_{t}$ for all $s,t\in\Gamma^{+}$.
A pair $(\pi,V)$,  of non degenerate representation $\pi$ of $A$ and an isometric representation $V$ of $\Gamma^{+}$ on $H$, is a
\emph{covariant isometric representations} of $(A,\Gamma^{+},\alpha)$ if $\pi(\alpha_{s}(a))=V_{s}\pi(a)V_{s}^{*}$ for all $a\in A$ and $s\in\Gamma^{+}$.
The \emph{isometric crossed product} $A\times_{\alpha}^{\iso}\Gamma^{+}$ is generated by a universal isometric covariant representation $(i_{A},i_{\Gamma^{+}})$, such that there is a bijection $(\pi,V)\mapsto \pi\times V$ between covariant isometric representations of $(A,\Gamma^{+},\alpha)$ and non degenerate representations of $A\times_{\alpha}^{\iso}\Gamma^{+}$. We make a note that some systems $(A,\Gamma^{+},\alpha)$ may not have a non trivial covariant isometric representations, in which case their isometric crossed products give no information about the systems.

When $\alpha:\Gamma^{+}\rightarrow \End(A)$ is an action of $\Gamma^{+}$ such that every $\alpha_{x}$ is an automorphism of $A$, then every isometry $V_{s}$ in a covariant isometric representation $(\pi,V)$ is a unitary. Thus $A\times_{\alpha}^{\iso}\Gamma^{+}$ is isomorphic to the classical group crossed product $A\times_{\alpha}\Gamma$.
For more general situation, \cite{Adji1,Laca} show that we get, by dilating the system $(A,\Gamma^{+},\alpha)$, a $C^{*}$-algebra $B$ and an action $\beta$ of the group $\Gamma$ by automorphisms of $B$ such that $A\times_{\alpha}^{\iso}\Gamma^{+}$ is isomorphic to the full corner $p(B\times_{\alpha}\Gamma)p$ where $p$ is the unit $1_{M(A)}$ in $B$.

If $(A,\Gamma^{+},\alpha)$ is the distinguished system $(B_{\Gamma^{+}},\Gamma^{+},\tau)$ of the unital $C^*$-algebra
$B_{\Gamma^{+}}:=\overline{\newspan}\{1_{s}\in\ell^{\infty}(\Gamma^{+}): s\in\Gamma^{+}\}$ spanned by the characteristics functions
$1_{s}(x)=\left\{ \begin{array}{ll} 1 & \mbox { if } x\ge s \\ 0 &  \mbox{ if } x< s, \end{array} \right.$
and the action $\tau:\Gamma^{+}\rightarrow \End(B_{\Gamma^{+}})$ is given by the translation on $\ell^{\infty}(\Gamma^{+})$ which satisfies $\tau_{t}(1_{s})=1_{s+t}$.
Then \cite{ALNR} shows that any isometric representation $V$ of $\Gamma^{+}$ induces a unital representation $\pi_{V}:1_{s}\mapsto V_{s}V_{s}^{*}$ of $B_{\Gamma^{+}}$ such that
$(\pi_{V},V)$ is a covariant isometric representation of $(B_{\Gamma^{+}},\Gamma^{+},\tau)$,
and the representation $\pi_{V}\times V$ of $B_{\Gamma^{+}}\times_{\tau}^{\iso}\Gamma^{+}$ is faithful provided all $V_{s}$ are non unitary.
Since the isometric representation given by the Toeplitz representation $T:s\mapsto T_{s}$ of $\Gamma^{+}$ on $\ell^{2}(\Gamma^{+})$ is non unitary,
then $\pi_{T}\times T$ is an isomorphism of $B_{\Gamma^{+}}\times_{\tau}^{\iso}\Gamma^{+}$ onto the Toeplitz algebra $\T(\Gamma)$ .

\bigskip

We consider the two crossed product $(A\times_{\alpha}^{\iso}\Gamma^{+},i_{A},i_{\Gamma^{+}})$ and
$(A\times_{\alpha}^{\piso}\Gamma^{+},j_{A},j_{\Gamma^{+}})$ of a dynamical system $(A,\Gamma^{+},\alpha)$.
The equation $i_{\Gamma^{+}}(s)^{*}i_{\Gamma^{+}}(s)i_{A}(a)=i_{A}(a)i_{\Gamma^{+}}(s)^{*}i_{\Gamma^{+}}(s)$ is automatic because $i_{\Gamma^{+}}$ is an isometric representation of $\Gamma^{+}$.
Therefore we have a covariant partial-isometric representation $(i_{A},i_{\Gamma^{+}})$ of $(A,\Gamma^{+},\alpha)$ in the $C^{*}$-algebra $A\times_{\alpha}^{\iso}\Gamma^{+}$, and the universal property of $A\times_{\alpha}^{\piso}\Gamma^{+}$ gives a non degenerate homomorphism
\[ \phi:=i_{A}\times i_{\Gamma^{+}}:(A\times_{\alpha}^{\piso}\Gamma^{+},j_{A},j_{\Gamma^{+}}) \longrightarrow (A\times_{\alpha}^{\iso}\Gamma^{+},i_{A},i_{\Gamma^{+}}), \]
which satisfies
$\phi(j_{\Gamma^{+}}(x)^{*}j_{A}(a)j_{\Gamma^{+}}(y))=i_{\Gamma^{+}}(x)^{*}i_{A}(a)i_{\Gamma^{+}}(y)$ for all $a\in A$ and $x,y\in\Gamma^{+}$.
Consequently $\phi$ is surjective, and then we have a short exact sequence
\begin{equation}\label{xact}
0\longrightarrow\ker\phi\longrightarrow A\times_{\alpha}^{\piso}\Gamma^{+} \stackrel{\phi}{\longrightarrow} A\times_{\alpha}^{\iso}\Gamma^{+}
\longrightarrow 0.
\end{equation}
In the next proposition, we identify spanning elements for the ideal $\ker\phi$.

\begin{prop}\label{surj}
Suppose $(A,\Gamma^{+},\alpha)$ is a dynamical system.
Then
\begin{equation}\label{kernel-piso-iso}
\ker\phi=\overline{\newspan}\{j_{\Gamma^{+}}(x)^{*}j_{A}(a)(1-j_{\Gamma^{+}}(t)^{*}j_{\Gamma^{+}}(t))j_{\Gamma^{+}}(y) : a\in A,\mbox{ and } x,y,t \in\Gamma^{+} \}.
\end{equation}
\end{prop}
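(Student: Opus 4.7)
The plan is to prove $\ker\phi=I$, where $I$ denotes the right-hand side of \eqref{kernel-piso-iso}, by a sandwich argument: the inclusion $I\subseteq\ker\phi$ is routine, and for the reverse I show that the quotient $A\times_{\alpha}^{\piso}\Gamma^{+}/I$ inherits the universal property of $A\times_{\alpha}^{\iso}\Gamma^{+}$, so that the map $\widetilde{\phi}$ induced by $\phi$ on the quotient is an isomorphism.

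For $I\subseteq\ker\phi$ I would simply apply $\phi$ to a generator and note that $1-i_{\Gamma^{+}}(t)^{*}i_{\Gamma^{+}}(t)=0$ since $i_{\Gamma^{+}}$ is isometric. The first substantive step is to verify that $I$ is a closed two-sided ideal. Closedness is built in; $I$ is $*$-closed because $Q_{t}:=1-j_{\Gamma^{+}}(t)^{*}j_{\Gamma^{+}}(t)$ is self-adjoint and commutes with $j_{A}(A)$. Invariance under left and right multiplication by $j_{A}(b)$ and by $j_{\Gamma^{+}}(z)$ is immediate from the covariance identities in Lemma~4.3 of \cite{LR}. The delicate case is right multiplication by $j_{\Gamma^{+}}(z)^{*}$ (and, by $*$-closure, left multiplication by $j_{\Gamma^{+}}(z)$). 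For this I would first derive the product formula
\[
j_{\Gamma^{+}}(y)j_{\Gamma^{+}}(z)^{*} = \begin{cases} j_{\Gamma^{+}}(y-z)\,\overline{j_{A}}(\overline{\alpha}_{z}(1)), & y\geq z, \\ \overline{j_{A}}(\overline{\alpha}_{y}(1))\,j_{\Gamma^{+}}(z-y)^{*}, & y\leq z, \end{cases}
\]
from $j_{\Gamma^{+}}(s)j_{\Gamma^{+}}(s)^{*}=\overline{j_{A}}(\overline{\alpha}_{s}(1))$, and then exploit the collapse $j_{\Gamma^{+}}(t)^{*}\overline{j_{A}}(\overline{\alpha}_{t}(1))=j_{\Gamma^{+}}(t)^{*}$ (an adjoint form of $j_{\Gamma^{+}}(t)=\overline{j_{A}}(\overline{\alpha}_{t}(1))j_{\Gamma^{+}}(t)$) to rewrite $j_{\Gamma^{+}}(x)^{*}j_{A}(a)Q_{t}j_{\Gamma^{+}}(y)j_{\Gamma^{+}}(z)^{*}$ as a generator of $I$ (or as zero).

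Next I would verify that $q\colon A\times_{\alpha}^{\piso}\Gamma^{+}\to A\times_{\alpha}^{\piso}\Gamma^{+}/I$ makes $(q\circ j_{A},\overline{q}\circ j_{\Gamma^{+}})$ a covariant isometric representation. Covariance is inherited; the only real issue is that $\overline{q}(j_{\Gamma^{+}}(t))$ must be an honest isometry in the multiplier algebra, which amounts to $Q_{t}b\in I$ and $bQ_{t}\in I$ for every spanning $b=j_{\Gamma^{+}}(x)^{*}j_{A}(a)j_{\Gamma^{+}}(y)$. The key identity is $j_{\Gamma^{+}}(t)^{*}j_{\Gamma^{+}}(t)j_{\Gamma^{+}}(x)^{*}=j_{\Gamma^{+}}(x)^{*}E_{t-x}$ when $t\geq x$ (with $E_{s}:=j_{\Gamma^{+}}(s)^{*}j_{\Gamma^{+}}(s)$), and $=j_{\Gamma^{+}}(x)^{*}$ when $t\leq x$, yielding either $Q_{t}b=0$ or $Q_{t}b=j_{\Gamma^{+}}(x)^{*}j_{A}(a)Q_{t-x}j_{\Gamma^{+}}(y)\in I$; the right-multiplication case is dual.

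Granted this, the universal property of $A\times_{\alpha}^{\iso}\Gamma^{+}$ produces a homomorphism $\psi\colon A\times_{\alpha}^{\iso}\Gamma^{+}\to A\times_{\alpha}^{\piso}\Gamma^{+}/I$ carrying $i_{A}$ to $q\circ j_{A}$ and $i_{\Gamma^{+}}$ to $\overline{q}\circ j_{\Gamma^{+}}$; checking on the spanning generators shows that $\psi$ is inverse to $\widetilde{\phi}$, so $\widetilde{\phi}$ is an isomorphism and $\ker\phi=I$. I expect the main nuisance throughout to be the case split between $t\leq x$ and $t\geq x$ in the product $j_{\Gamma^{+}}(t)j_{\Gamma^{+}}(x)^{*}$; unlike in the isometric setting, these partial isometries admit no single universal commutation rule, so the case analysis cannot be avoided.
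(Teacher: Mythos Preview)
Your proposal is correct and follows essentially the same strategy as the paper: verify that $I$ is an ideal via the case analysis on products $j_{\Gamma^{+}}(s)j_{\Gamma^{+}}(x)^{*}$, note $I\subseteq\ker\phi$ since $\overline{\phi}(Q_{t})=0$, and obtain the reverse inclusion by showing that modulo $I$ the canonical pair becomes an isometric covariant representation, so the universal property of $A\times_{\alpha}^{\iso}\Gamma^{+}$ yields a one-sided inverse to $\widetilde{\phi}$. The only cosmetic difference is that the paper passes to a faithful Hilbert-space representation $\rho$ of the quotient and uses an approximate identity to see that each $V_{t}=\overline{\rho}(j_{\Gamma^{+}}(t))$ is isometric, whereas you work directly in $M\bigl(A\times_{\alpha}^{\piso}\Gamma^{+}/I\bigr)$ and check $Q_{t}b\in I$ on spanning elements; the two arguments are interchangeable.
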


Before we prove this proposition, we first want to show the following lemma.

\begin{lemma}\label{p-jstar} For $t\in\Gamma^{+}$, let $P_{t}$ be the projection $1-j_{\Gamma^{+}}(t)^{*}j_{\Gamma^{+}}(t)$.
Then the set $\{P_{t} : t \in\Gamma^{+}\}$ is a family of increasing projections in the multiplier algebra $M(A\times_{\alpha}^{\piso}\Gamma^{+})$,
which satisfy the following equations: $j_{A}(a)P_{t}=P_{t}j_{A}(a)$ for $a\in A$ and $t\in\Gamma^{+}$,
\[ P_{x} j_{\Gamma^{+}}(y)^{*}=\left\{\begin{array}{ll} 0 & \quad \mbox{ if } x\le y \\
j_{\Gamma^{+}}(y)^{*} P_{x-y} & \quad \mbox{  if } x>y,  \end{array} \right. \quad \text{ and } \quad
P_{x} P_{y} =\left\{\begin{array}{ll} P_{x} \quad \mbox{ if } x\le y \\ P_{y} \quad \mbox{  if  } x> y. \end{array} \right. \]
\end{lemma}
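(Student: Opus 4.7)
The plan is to prove the four claims in order—commutation with $j_A(a)$, the formula for $P_x j_{\Gamma^+}(y)^*$, the product formula $P_x P_y$, then increasingness—with the last reducing to the third. The commutation is an immediate consequence of the second covariance identity $j_{\Gamma^+}(t)^* j_{\Gamma^+}(t) j_A(a) = j_A(a) j_{\Gamma^+}(t)^* j_{\Gamma^+}(t)$: subtracting this from $j_A(a)$ itself yields $P_t j_A(a) = j_A(a) P_t$.

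For the formula for $P_x j_{\Gamma^+}(y)^*$, I would exploit the abelianness of $\Gamma^+$ (which gives $j_{\Gamma^+}(s) j_{\Gamma^+}(t) = j_{\Gamma^+}(t) j_{\Gamma^+}(s)$ for all $s,t$) and split on $x \le y$ versus $x > y$. In the easy case $x \le y$, set $V = j_{\Gamma^+}(x)$ and $W = j_{\Gamma^+}(y-x)$; then $j_{\Gamma^+}(y)^* = V^* W^*$, and $(1 - V^*V) V^* W^*$ collapses to $0$ by the partial-isometry identity $V^* V V^* = V^*$. In the case $x > y$, set $V = j_{\Gamma^+}(y)$ and $W = j_{\Gamma^+}(x-y)$, so that $j_{\Gamma^+}(x)^* j_{\Gamma^+}(x) = V^* W^* W V$; the asserted identity $P_x V^* = V^* P_{x-y}$ then reduces algebraically to the single equation $V^* W^* W V V^* = V^* W^* W$.

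This last equation is where covariance is indispensable. By the equivalent form of covariance (Lemma 4.3 of \cite{LR}) one has $V V^* = \overline{j_A}(\overline{\alpha}_y(1))$, so $V V^*$ lies in $\overline{j_A}(M(A))$; and by extending the initial-projection form of covariance strictly to $M(A)$, $W^* W$ commutes with $\overline{j_A}(M(A))$, hence with $V V^*$. Combining this commutation with $V^* V V^* = V^*$ yields the equation. For the product formula (4) I would expand $P_x P_y = (1 - V^* V)(1 - V^* W^* W V)$, where $V$ and $W$ are chosen so that $V^* V$ corresponds to the smaller of $P_x$, $P_y$; the cross terms cancel via $V^* V V^* = V^*$, leaving $1 - V^* V$. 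The increasingness of $\{P_t\}$ is then exactly the relation $P_s P_t = P_s$ for $s \le t$ just obtained.

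The main obstacle is the single equation $V^* W^* W V V^* = V^* W^* W$ in the $x > y$ case: it fails for arbitrary commuting partial isometries and genuinely requires the representation-theoretic fact $V V^* \in \overline{j_A}(M(A))$ to move $V V^*$ past $W^* W$. Every other step is purely formal algebra from $V^* V V^* = V^*$ and the commutativity $V W = W V$ forced by $\Gamma^+$ being abelian.
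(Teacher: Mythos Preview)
Your proposal is correct and follows essentially the same route as the paper. In particular, the crux of the $x>y$ case---rewriting $j_{\Gamma^{+}}(y)j_{\Gamma^{+}}(y)^{*}$ as $\overline{j}_{A}(\overline{\alpha}_{y}(1))$ via covariance and then commuting it past $j_{\Gamma^{+}}(x-y)^{*}j_{\Gamma^{+}}(x-y)$---is exactly what the paper does. The only organizational difference is that the paper proves increasingness first by exhibiting $P_{s}-P_{t}$ (for $s\ge t$) as $[P_{s-t}j_{\Gamma^{+}}(t)]^{*}[P_{s-t}j_{\Gamma^{+}}(t)]\ge 0$, whereas you deduce it at the end from the product formula $P_{x}P_{y}=P_{\min\{x,y\}}$; both are valid and equally short.
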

\begin{proof}
For $s\ge t$ in $\Gamma^{+}$,
\begin{align*}
P_{s}-P_{t} & = (1-j_{\Gamma^{+}}(s)^{*}j_{\Gamma^{+}}(s))-(1-j_{\Gamma^{+}}(t)^{*}j_{\Gamma^{+}}(t)) \\
& = j_{\Gamma^{+}}(t)^{*}j_{\Gamma^{+}}(t) - j_{\Gamma^{+}}(s)^{*}j_{\Gamma^{+}}(s) \\
& = j_{\Gamma^{+}}(t)^{*}j_{\Gamma^{+}}(t) - j_{\Gamma^{+}}(t)^{*}j_{\Gamma^{+}}(s-t)^{*}j_{\Gamma^{+}}(s-t)j_{\Gamma^{+}}(t) \\
& = j_{\Gamma^{+}}(t)^{*} P_{s-t} j_{\Gamma^{+}}(t)= j_{\Gamma^{+}}(t)^{*} P_{s-t} P_{s-t} j_{\Gamma^{+}}(t)\\
& = [P_{s-t}j_{\Gamma^{+}}(t)]^{*} [P_{s-t}j_{\Gamma^{+}}(t)].
\end{align*}
So $P_{s}-P_{t}\ge 0$, and hence $P_{s}\ge P_{t}$.

If $x\le y$, then
\begin{align*}
P_{x} j_{\Gamma^{+}}(y)^{*} & = (1-j_{\Gamma^{+}}(x)^{*}j_{\Gamma^{+}}(x))j_{\Gamma^{+}}(x)^{*}j_{\Gamma^{+}}(y-x)^{*}\\
& = [j_{\Gamma^{+}}(x)^{*}-j_{\Gamma^{+}}(x)^{*}j_{\Gamma^{+}}(x)j_{\Gamma^{+}}(x)^{*}]j_{\Gamma^{+}}(y-x)^{*}=0,
\end{align*}
and if $x>y$, we have
\begin{align*}
P_{x} j_{\Gamma^{+}}(y)^{*} & = j_{\Gamma^{+}}(y)^{*}-j_{\Gamma^{+}}(x)^{*}j_{\Gamma^{+}}(x)j_{\Gamma^{+}}(y)^{*} \\
& = j_{\Gamma^{+}}(y)^{*}-j_{\Gamma^{+}}(y)^{*}j_{\Gamma^{+}}(x-y)^{*}j_{\Gamma^{+}}(x-y)j_{\Gamma^{+}}(y)j_{\Gamma^{+}}(y)^{*}  \\
& = j_{\Gamma^{+}}(y)^{*}-j_{\Gamma^{+}}(y)^{*} j_{\Gamma^{+}}(x-y)^{*}j_{\Gamma^{+}}(x-y)\overline{j}_{A}(\overline{\alpha}_{y}(1)) \\
& = j_{\Gamma^{+}}(y)^{*}-j_{\Gamma^{+}}(y)^{*} \overline{j}_{A}(\overline{\alpha}_{y}(1)) j_{\Gamma^{+}}(x-y)^{*}j_{\Gamma^{+}}(x-y)\\
& = j_{\Gamma^{+}}(y)^{*}-[j_{\Gamma^{+}}(y)^{*} j_{\Gamma^{+}}(y)j_{\Gamma^{+}}(y)^{*}] j_{\Gamma^{+}}(x-y)^{*}j_{\Gamma^{+}}(x-y) \\
& = j_{\Gamma^{+}}(y)^{*}P_{x-y}.
\end{align*}

Next we use the equation
\[ j_{\Gamma^{+}}(x)^{*}j_{\Gamma^{+}}(x)j_{\Gamma^{+}}(y)^{*}j_{\Gamma^{+}}(y)=j_{\Gamma^{+}}(\max\{x,y\})^{*}j_{\Gamma^{+}}(\max\{x,y\})
\text{ for any } x,y \in\Gamma^{+}, \]
to see that
\begin{align*}
P_{x}P_{y} & = (1-j_{\Gamma^{+}}(x)^{*}j_{\Gamma^{+}}(x))(1-j_{\Gamma^{+}}(y)^{*}j_{\Gamma^{+}}(y))\\
& = 1-j_{\Gamma^{+}}(x)^{*}j_{\Gamma^{+}}(x)-j_{\Gamma^{+}}(y)^{*}j_{\Gamma^{+}}(y)+j_{\Gamma^{+}}(x)^{*}j_{\Gamma^{+}}(x)j_{\Gamma^{+}}(y)^{*}j_{\Gamma^{+}}(y)\\
& = 1-j_{\Gamma^{+}}(x)^{*}j_{\Gamma^{+}}(x)-j_{\Gamma^{+}}(y)^{*}j_{\Gamma^{+}}(y)+j_{\Gamma^{+}}(\max\{x,y\})^{*}j_{\Gamma^{+}}(\max\{x,y\})\\
& = \left\{\begin{array}{ll} P_{x} \quad \mbox{ if } x\le y \\ P_{y} \quad \mbox{  if  } x> y. \end{array} \right.
\end{align*}
\end{proof}

\begin{proof}[Proof of Proposition \ref{surj}]
We clarify that the right hand side of (\ref{kernel-piso-iso})
\[ \mathcal{I}:=\overline{\newspan}\{j_{\Gamma^{+}}(x)^{*}j_{A}(a)(1-j_{\Gamma^{+}}(t)^{*}j_{\Gamma^{+}}(t))j_{\Gamma^{+}}(y) : a\in A, \mbox{ and } x,y,t \in\Gamma^{+} \}\]
is an ideal of $(A\times_{\alpha}^{\piso}\Gamma^{+},j_{A},j_{\Gamma^{+}})$, by showing that $j_{A}(b)\I$ and $j_{\Gamma^{+}}(s)\I$,
$j_{\Gamma^{+}}(s)^{*}\I$ are contained in $\I$ for all $b\in A$ and $s\in\Gamma^{+}$. The last containment is trivial.
For the first two, we compute using the partial isometric covariance of $(j_{A},j_{\Gamma^{+}})$ to get the following equations for $b\in A$, $s, x\in\Gamma^{+}$:
\[ j_{A}(b)j_{\Gamma^{+}}(x)^{*}=[j_{\Gamma^{+}}(x) j_{A}(b^{*})]^{*}=[j_{A}(\alpha_{x}(b^{*}))j_{\Gamma^{+}}(x)]^{*}=
j_{\Gamma^{+}}(x)^{*}j_{A}(\alpha_{x}(b)), \]
and
\[
j_{\Gamma^{+}}(s)j_{\Gamma^{+}}(x)^{*}=\left\{\begin{array}{ll} j_{\Gamma^{+}}(x-s)^{*}j_{\Gamma^{+}}(x)j_{\Gamma^{+}}(x)^{*}=
j_{\Gamma^{+}}(x-s)^{*}\overline{j}_{A}(\overline{\alpha}_{x}(1))  & \mbox{ if } s<x \\
j_{\Gamma^{+}}(x)j_{\Gamma^{+}}(x)^{*}=\overline{j}_{A}(\overline{\alpha}_{x}(1)) & \mbox{ if } s=x \\
j_{\Gamma^{+}}(s-x)j_{\Gamma^{+}}(x)j_{\Gamma^{+}}(x)^{*}=\overline{j}_{A}(\overline{\alpha}_{s}(1)) j_{\Gamma^{+}}(s-x) & \mbox{ if } s>x.
\end{array}
\right.
\]
Consequently we have
\[
j_{A}(b)j_{\Gamma^{+}}(x)^{*}j_{A}(a)P_{t} j_{\Gamma^{+}}(y)
= j_{\Gamma^{+}}(x)^{*}j_{A}(\alpha_{x}(b)a)P_{t}j_{\Gamma^{+}}(y) \in \I,
\]
and
\[
j_{\Gamma^{+}}(s)j_{\Gamma^{+}}(x)^{*}j_{A}(a)P_{t} j_{\Gamma^{+}}(y)
 = j_{\Gamma^{+}}(x-s)^{*}j_{A}(\overline{\alpha}_{x}(1)a)P_{t} j_{\Gamma^{+}}(y) \in \I
\]
whenever $b\in A$ and $t, s\le x$ in $\Gamma^{+}$.
If $s>x$, then
\[ P_{t}j_{\Gamma^{+}}(s-x)^{*}=\left\{ \begin{array}{ll} 0 & \mbox{ for } t\le s-x \\ j_{\Gamma^{+}}(s-x)^{*}P_{t-(s-x)} & \mbox { for } t>s-x. \end{array}\right. \]
Therefore
\begin{align*} j_{\Gamma^{+}}(s)j_{\Gamma^{+}}(x)^{*}j_{A}(a)P_{t} j_{\Gamma^{+}}(y) & =
\overline{j}_{A}(\overline{\alpha}_{s}(1))j_{\Gamma^{+}}(s-x)j_{A}(a)P_{t} j_{\Gamma^{+}}(y)\\
& = \overline{j}_{A}(\overline{\alpha}_{s}(1))j_{A}(\alpha_{s-x}(a))j_{\Gamma^{+}}(s-x)P_{t}j_{\Gamma^{+}}(y)\\
& = j_{A}(\overline{\alpha}_{s}(1)\alpha_{s-x}(a))~[P_{t}j_{\Gamma^{+}}(s-x)^{*}]^{*}~j_{\Gamma^{+}}(y),
\end{align*}
which is the zero element of $\I$ for $t\le s-x$, and is the element
\[ j_{A}(\overline{\alpha}_{s}(1)\alpha_{s-x}(a))P_{t-(s-x)}j_{\Gamma^{+}}(s-x+y) \text{ of } \I \text{ for } t>s-x. \]
So $j_{\Gamma^{+}}(s)j_{\Gamma^{+}}(x)^{*}j_{A}(a)P_{t} j_{\Gamma^{+}}(y)$ belongs to $\I$, and
$\I$ is an ideal of $A\times_{\alpha}^{\piso}\Gamma^{+}$.

We are now showing the equation $\ker \phi=\I$. The first inclusion $\I\subset \ker\phi$ follows from the fact that $\I$ is an ideal of $A\times_{\alpha}^{\piso}\Gamma^{+}$, and that $\overline{\phi}(P_{t})=1-i_{\Gamma^{+}}(t)^{*}i_{\Gamma^{+}}(t)=0$ for all $t\in\Gamma^{+}$.
For the other inclusion, suppose $\rho$ is a non degenerate representation of
$A\times_{\alpha}^{\piso}\Gamma^{+}$ on a Hilbert space $H$ with $\ker\rho=\I$.
Then the pair $(\pi:=\rho\circ j_{A}, V:=\overline{\rho}\circ j_{\Gamma^{+}})$ is a covariant partial-isometric representation of $(A,\Gamma^{+},\alpha)$ on $H$. We claim that every $V_{t}$ is an isometry. To see this,
let $(a_{\lambda})$ be an approximate identity for $A$. Then we have
\[
 0 =\rho(j_{A}(a_{\lambda})(1-j_{\Gamma^{+}}(t)^{*}j_{\Gamma^{+}}(t)))
 =\pi(a_{\lambda})(1-V_{t}^{*}V_{t}) \text{ for all } \lambda,
\]
and $\pi(a_{\lambda})(1-V_{t}^{*}V_{t})$ converges strongly to $1-V_{t}^{*}V_{t}$ in $B(H)$.
Therefore $1-V_{t}^{*}V_{t}=0$.
Consequently the pair $(\pi,V)$ is a covariant isometric representation
of $(A,\Gamma^{+},\alpha)$ on $H$, and
hence there exists  a non degenerate representation $\psi$ of $(A\times_{\alpha}^{\iso}\Gamma^{+},i_{A},i_{\Gamma^{+}})$ on $H$ which satisfies
$\psi(i_{A}(a))=\rho(j_{A}(a))$ and $\overline{\psi}(i_{\Gamma^{+}}(x))=\overline{\rho}(j_{\Gamma^{+}}(x))$ for all $a\in A$ and $x\in\Gamma^{+}$.
So $\psi\circ\phi=\rho$ on the spanning elements of $A\times_{\alpha}^{\piso}\Gamma^{+}$, thus $\ker\phi\subset\ker\rho$.
\end{proof}

\begin{prop}
If $\Gamma$ is a subgroup of $\R$, then $\ker \phi$ is an essential ideal of the crossed product $A\times_{\alpha}^{\piso}\Gamma^{+}$.
\end{prop}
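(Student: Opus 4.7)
The plan is to establish essentiality of $\mathcal{I}:=\ker\phi$ in $B:=A\times_\alpha^{\piso}\Gamma^+$ by showing its right annihilator $\{b\in B:b\mathcal{I}=0\}$ vanishes; this is equivalent to essentiality via the standard fact that an ideal in a $C^*$-algebra is essential if and only if its (left or right) annihilator is zero, an equivalence that holds here because $\mathcal{I}$ is self-adjoint.

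First I reduce the problem to: if $bP_t=0$ for every $t\in\Gamma^+$, then $b=0$. By Proposition~\ref{surj}, specialising the spanning elements of $\mathcal{I}$ to $x=y=0$ shows $j_A(a)P_t\in\mathcal{I}$ for every $a\in A$ and $t\in\Gamma^+$, so $b\cdot j_A(a)P_t=0$. Taking an approximate identity $(e_\lambda)$ of $A$, non-degeneracy of $j_A$ gives $j_A(e_\lambda)\to 1_{M(B)}$ strictly in $M(B)$, whence $j_A(e_\lambda)P_t\to P_t$ strictly; since multiplying on the left by the fixed element $b\in B$ converts strict convergence in $M(B)$ to norm convergence in $B$, I conclude $bP_t=\lim_\lambda b\cdot j_A(e_\lambda)P_t=0$ for every $t\in\Gamma^+$.

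Next I invoke the concrete faithful covariant partial-isometric representation $(\pi_\alpha,V)$ of $(A,\Gamma^+,\alpha)$ on the Hilbert $A$-module $\ell^2(\Gamma^+,A)$ that realises $B$ as a full corner in a subalgebra of $\L(\ell^2(\Gamma^+,A))$, constructed in Section~3 (after the manner of \cite{KS}). Under this embedding each $V_t=j_{\Gamma^+}(t)$ is represented by $S_t^*$, where $S_t$ denotes the shift isometry on $\ell^2(\Gamma^+,A)$, so $V_t^*V_t=S_tS_t^*$ acts as the projection onto the "tail" submodule $\{f:f(y)=0\text{ for }y<t\}$ and $P_t=1-V_t^*V_t$ as the projection onto the complementary "initial segment" $\{f:f(y)=0\text{ for }y\ge t\}$.

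The hypothesis $\Gamma\subseteq\R$ (excluding the trivial case $\Gamma=\{0\}$) enters exactly at this point: $\Gamma^+$ has no largest element, so $\bigcap_{t\in\Gamma^+}\{y\in\Gamma^+:y\ge t\}=\emptyset$, and for any $f\in\ell^2(\Gamma^+,A)$ the $\ell^2$-tail $\sum_{y\ge t}\langle f(y),f(y)\rangle$ vanishes in norm in $A$ as $t\to\infty$. Hence $P_tf\to f$ in norm, so $P_t\to I$ strongly in the faithful representation; combined with $bP_t=0$ this yields $(\pi_\alpha\times V)(b)=0$, and faithfulness forces $b=0$. The main obstacle is step two, namely identifying $V_t^*V_t$ explicitly as a tail projection under the corner realisation of Section~3; once that identification is in hand, the strong convergence $P_t\to I$ in step three is a direct consequence of $\Gamma^+$ having no maximum.
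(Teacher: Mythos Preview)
Your argument is correct, but it follows a genuinely different route from the paper's. The paper proves essentiality by intersecting $\ker\phi$ with an arbitrary nonzero ideal $J$: it represents $B$ on a Hilbert space with kernel $J$, invokes the faithfulness criterion \cite[Theorem 4.8]{LR} to produce a nonzero $a\in A$ and $s>0$ with $\pi(a)(1-V_s^{*}V_s)=0$, and then uses the Archimedean property of $\Gamma\subseteq\R$ to argue that $j_A(a)P_s\neq 0$ (otherwise every $j_{\Gamma^{+}}(x)$ would be an isometry and $\ker\phi=0$). You instead kill the right annihilator directly via the corner embedding $\psi:B\hookrightarrow p\T_\alpha p\subseteq\L(\ell^2(\Gamma^{+},A))$ of Section~3, together with the strong convergence $(1-S_tS_t^{*})\to I$.

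Two remarks. First, a small imprecision: under $\psi$ the element $j_{\Gamma^{+}}(t)$ is sent to $w_t=pS_t^{*}p$, not to $S_t^{*}$; correspondingly $\overline{\psi}(P_t)=p-w_t^{*}w_t=p(1-S_tS_t^{*})p$. This does not affect your conclusion, because $\psi(b)p=\psi(b)$ and $p$ commutes with $1-S_tS_t^{*}$ (Lemma~\ref{pi-alpha-bar}), so $\psi(b)(1-S_tS_t^{*})=\psi(b)(1-S_tS_t^{*})p=0$ and the strong convergence argument goes through.

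Second, your location of the hypothesis $\Gamma\subseteq\R$ is misleading: the fact that $\Gamma^{+}$ has no largest element holds for \emph{every} nontrivial totally ordered abelian group (if $s=\max\Gamma^{+}$ then $2s\le s$ forces $s=0$), and your tail estimate $\|\sum_{y\ge t}f(y)^{*}f(y)\|\to 0$ needs only that every finite subset of $\Gamma^{+}$ has an upper bound in $\Gamma^{+}$, which is automatic by total order. So your proof in fact establishes the proposition for all nontrivial $\Gamma$, not just subgroups of $\R$; the Archimedean hypothesis is an artifact of the paper's method, where it is genuinely needed to pass from ``$j_{\Gamma^{+}}(s)$ is an isometry'' to ``every $j_{\Gamma^{+}}(x)$ is an isometry''. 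This is a gain of your approach.
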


\begin{proof}
Let $J$ be a non zero ideal of $A\times_{\alpha}^{\piso}\Gamma^{+}$, we want to show that $J\cap \ker\phi\neq \{0\}$.
Assume that $\ker\phi\neq \{0\}$.
Take a non degenerate representation $\pi\times V$ of $A\times_{\alpha}^{\piso}\Gamma^{+}$ on $H$ such that  $\ker\pi\times V= J$.
Since $J\neq \{0\}$, $\pi\times V$ is not a faithful representation.
Consequently, by \cite[Theorem 4.8]{LR}, $\pi$ does not act faithfully on $(V_{s}^{*}H)^{\perp}$ for some $s\in \Gamma^{+}\backslash\{0\}$ .
So there is $a\neq 0$ in $A$ such that $\pi(a)(1-V_{s}^{*}V_{s})=0$.
It follows from
\[ 0=\pi(a)(1-V_{s}^{*}V_{s})=\pi\times V (j_{A}(a)(1-j_{\Gamma^{+}}(s)^{*}j_{\Gamma^{+}}(s))), \]
that $j_{A}(a)(1-j_{\Gamma^{+}}(s)^{*}j_{\Gamma^{+}}(s))$ belongs to $\ker\pi\times V=J$.
Moreover $j_{A}(a)(1-j_{\Gamma^{+}}(s)^{*}j_{\Gamma^{+}}(s))$ is also contained in $\ker\phi$ because $\overline{\phi}(P_{s})=0$,
hence it is contained in $\ker\phi \cap J$.

Next we have to clarify that $j_{A}(a)(1-j_{\Gamma^{+}}(s)^{*}j_{\Gamma^{+}}(s))$ is nonzero.
If it is zero, then $1-j_{\Gamma^{+}}(s)^{*}j_{\Gamma^{+}}(s)=0$ because $j_{A}(a)\neq 0$ by injectivity of $j_{A}$.
Thus $j_{\Gamma^{+}}(s)$ is an isometry, and so is $j_{\Gamma^{+}}(ns)$ for every $n\in\N$.
We claim that every $j_{\Gamma^{+}}(x)$ is an isometry, and consequently
$A\times_{\alpha}^{\piso}\Gamma^{+}$ is isomorphic to  $A\times_{\alpha}^{\iso}\Gamma^{+}$.
Therefore $\ker\phi =0$, and $j_{A}(a)(1-j_{\Gamma^{+}}(s)^{*}j_{\Gamma^{+}}(s))$ can not be zero.

To justify the claim, note that if $x<s$ then $s-x<s$, and we have
\begin{align*}
j_{\Gamma^{+}}(s-x)^{*}  j_{\Gamma^{+}}(s) & = j_{\Gamma^{+}}(s-x)^{*}j_{\Gamma^{+}}(s-x)j_{\Gamma^{+}}(s-(s-x)) \\
& = [j_{\Gamma^{+}}(s-x)^{*}j_{\Gamma^{+}}(s-x)][j_{\Gamma^{+}}(x)j_{\Gamma^{+}}(x)^{*}]j_{\Gamma^{+}}(x) \\
& = [j_{\Gamma^{+}}(x)j_{\Gamma^{+}}(x)^{*}][j_{\Gamma^{+}}(s-x)^{*}j_{\Gamma^{+}}(s-x)]j_{\Gamma^{+}}(x) \\
& =j_{\Gamma^{+}}(x) j_{\Gamma^{+}}(s)^{*}j_{\Gamma^{+}}(s)=j_{\Gamma^{+}}(x).
\end{align*}
So the equation $j_{\Gamma^{+}}(s)^{*}=j_{\Gamma^{+}}(x)^{*}j_{\Gamma^{+}}(s-x)^{*}$ implies
\[ 1=j_{\Gamma^{+}}(s)^{*}j_{\Gamma^{+}}(s)=j_{\Gamma^{+}}(x)^{*}j_{\Gamma^{+}}(s-x)^{*} j_{\Gamma^{+}}(s)=
j_{\Gamma^{+}}(x)^{*}j_{\Gamma^{+}}(x). \]
Thus $j_{\Gamma^{+}}(x)$ is an isometry for every $x< s$.
For $x>s$, by the Archimedean property of $\Gamma$, there exists $n_{x}\in\N$ such that $x< n_{x} s$, and since $j_{\Gamma^{+}}(n_{x}s)$ is an isometry, applying the previous arguments, we see that $j_{\Gamma^{+}}(x)$ is an isometry.
\end{proof}

\section{The partial-isometric crossed product as a full corner.}
Suppose $(A,\Gamma^{+},\alpha)$  is a dynamical system.
We consider the Hilbert $A$-module
$\ell^{2}(\Gamma^{+},A)=\{f:\Gamma^{+}\rightarrow A:\sum_{x\in\Gamma^{+}} f(x)^{*}f(x) \text{ converges in the norm of }A \}$ with the module structure:  $(f\cdot a)(x) =f(x)a$ and $\langle f,g\rangle=\sum_{x\in\Gamma^{+}} f(x)^{*}g(x)$ for $f,g\in\ell^{2}(\Gamma^{+},A)$ and $a\in A$.
One can also want to consider the Hilbert $A$-module $\ell^{2}(\Gamma^{+})\otimes A$, the completion of
the vector space tensor product $\ell^{2}(\Gamma^{+})\odot A$, that has a right (incomplete) inner product $A$-module structure:
$(x\otimes a)\cdot b=x\otimes ab$ and $\langle x\otimes a,y\otimes b\rangle=(y|x) a^{*}b$ for $x,y\in \ell^{2}(\Gamma^{+})$ and $a, b\in A$.
The two modules are naturally isomorphic via the map defined by $\phi: x\otimes a\mapsto \phi(x\otimes a)(t)=x(t)a$ for $x\in\ell^{2}(\Gamma^{+}), t\in\Gamma^{+}, a\in A$.

Let $\pi_{\alpha}:A\rightarrow \L(\ell^{2}(\Gamma^{+},A))$ be a map of $A$ into the $C^*$-algebra $\L(\ell^{2}(\Gamma^{+},A))$ of adjointable operators on $\ell^{2}(\Gamma^{+},A)$, defined by
\[
 (\pi_{\alpha}(a)f)(t)=\alpha_{t}(a)f(t) \text{ for } a\in A, f\in \ell^{2}(\Gamma^{+},A). \]
It is a well-defined map as we can see that $\pi_{\alpha}(a)f\in \ell^{2}(\Gamma^{+},A)$:
\[ \sum_{t\in\Gamma^{+}}(\alpha_{t}(a)f(t))^{*}(\alpha_{t}(a)f(t))=\sum_{t\in\Gamma^{+}}f(t)^{*}\alpha_{t}(a^{*}a)f(t)\le
\|\alpha_{t}(a^{*}a)\|\sum_{t\in\Gamma^{+}}f(t)^{*}f(t).\]
Moreover $\pi_{\alpha}$ is an injective *-homomorphism, which could be degenerate (for example when each of endomorphism $\alpha_{t}$ acts on a unital algebra $A$ and $\alpha_{t}(1)\neq 1$).

Let $S\in \L(\ell^{2}(\Gamma^{+},A))$ defined by
\[ S_{t}(f)(i)=\left\{ \begin{array}{ll} f(i-t) & \text{ if }   i\ge t  \\ 0  & \text{ if } i<t. \end{array} \right. \]
Then $S_{t}^{*}S_{t}=1$, $S_{t}S_{t}^{*} \neq 1$, and the pair $(\pi_{\alpha},S)$ satisfies the following equations:
\begin{equation}\label{pii-S}
\pi_{\alpha}(a)S_{t}=S_{t}\pi_{\alpha}(\alpha_{t}(a))
\text{ and }
(1-S_{t}S_{t}^{*}) \pi_{\alpha}(a)=\pi_{\alpha}(a)(1-S_{t}S_{t}^{*})  \text{ for all } a\in A, t\in\Gamma^{+}.
\end{equation}

Next we consider the vector subspace of $\L(\ell^{2}(\Gamma^{+},A))$ spanned by $\{S_{x}\pi_{\alpha}(a)S_{y}^{*}: a\in A, x,y\in\Gamma^{+}\}$.
Using the equations in (\ref{pii-S}), one can see this space is closed under the multiplication and adjoint, we therefore have a  $C^*$-subalgebra of  $\L(\ell^{2}(\Gamma^{+},A))$, namely
\begin{equation}\label{t-alpha}
\T_{\alpha}:=\overline{\newspan}\{S_{x}\pi_{\alpha}(a)S_{y}^{*}: a\in A, x,y\in\Gamma^{+}\}.
\end{equation}
One can see that $x\in\Gamma^{+}\mapsto S_{x}\in M(\T_{\alpha})$ is a semigroup of non unitary isometries, and $\pi_{\alpha}(A)\subseteq\T_{\alpha}$.
We show in Lemma \ref{pi-alpha-bar} that $\pi_{\alpha}$ extends to the strictly continuous homomorphism $\overline{\pi}_{\alpha}$ on the multiplier algebra $M(A)$, and the equations in (\ref{pii-S}) remain valid.

The algebra $\T_{\alpha}$ defined in (\ref{t-alpha}) satisfies the following natural properties.
If $(A,\Gamma^{+},\alpha)$ and $(B,\Gamma^{+},\beta)$ are two dynamical systems with extendible endomorphism actions,
let $S_{x}\pi_{\alpha}(a)S_{y}^{*}$ and $T_{x}\pi_{\beta}(b)T_{y}^{*}$ denote spanning elements for $\T_{\alpha}$ and $\T_{\beta}$ respectively.
If $\phi:A\rightarrow B$ is a non degenerate homomorphism such that $\phi\circ \alpha_{t}=\beta_{t}\circ\phi$ for every $t\in\Gamma^{+}$, then
by using the identification $\ell^{2}(\Gamma^{+},A)\otimes_{A} B~\simeq~\ell^{2}(\Gamma^{+},B)$,
we have a homomorphism $\tau_{\phi}:\T_{\alpha}\rightarrow \T_{\beta}$ which satisfies
$\tau_{\phi}(S_{x}\pi_{\alpha}(a)S_{y}^{*})=T_{x}\pi_{\beta}(\phi(a))T_{y}^{*}$ for all $a\in A$ and $x,y\in\Gamma^{+}$.
Note that if $\phi$ is injective then so is $\tau_{\phi}$.
This property is consistent with the extendibility of endomorphism $\alpha_{t}$ and $\beta_{t}$.
Since the canonical map $\iota_{A}:A\rightarrow M(A)$ is injective and non degenerate, it follows that
we have an injective homomorphism $\tau_{\iota_{A}}: \T_{\alpha} \rightarrow \T_{\overline{\alpha}}$ such that $\tau_{\iota_{A}}(\T_{\alpha})$ is an ideal of
$\T_{\overline{\alpha}}$.
Moreover since the non degenerate homomorphism $\phi:A\rightarrow B$ extends to $\overline{\phi}$ on the multiplier algebras which satisfies $\overline{\phi}\circ\overline{\alpha}_{t}=\overline{\beta}_{t}\circ\overline{\phi}~\forall t\in\Gamma^{+}$,
therefore $\overline{\phi}$ induces the homomorphism $\tau_{\overline{\phi}}:\T_{\overline{\alpha}}\rightarrow \T_{\overline{\beta}}$,
and it satisfies $\tau_{\overline{\phi}}\circ \tau_{\iota_{A}}=\tau_{\iota_{B}}\circ \tau_{\phi}$.

\begin{lemma}\label{pi-alpha-bar}
The homomorphism $\pi_{\alpha}:A\rightarrow M(\T_{\alpha})$  extends to the strictly continuous homomorphism $\overline{\pi}_{\alpha}$ on the multiplier algebra $M(A)$, such that the pair $(\overline{\pi}_{\alpha},S)$ satisfies
$\overline{\pi}_{\alpha}(m) S_{t}=S_{t}\overline{\pi}_{\alpha}(\overline{\alpha}_{t}(m))$ and
$(1-S_{t}S_{t}^{*}) \overline{\pi}_{\alpha}(m)=\overline{\pi}_{\alpha}(m)(1-S_{t}S_{t}^{*})$ for all $m\in M(A)$ and $t\in\Gamma^{+}.$
\end{lemma}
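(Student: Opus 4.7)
The plan is to bypass the abstract multiplier extension theorem (which would require first verifying non-degeneracy of $\pi_{\alpha}: A \to M(\T_{\alpha})$) by constructing $\overline{\pi}_{\alpha}$ directly on $\ell^{2}(\Gamma^{+},A)$ using the same formula that defines $\pi_{\alpha}$: for $m \in M(A)$, set $(\overline{\pi}_{\alpha}(m) f)(t) := \overline{\alpha}_{t}(m) f(t)$. This lies in $A$ pointwise since $\overline{\alpha}_{t}(m) \in M(A)$ and $f(t) \in A$, and the inequality $\overline{\alpha}_{t}(m^{*}m) \le \|m\|^{2}\cdot 1_{M(A)}$ yields $\sum_{t} f(t)^{*}\overline{\alpha}_{t}(m^{*}m) f(t) \le \|m\|^{2}\langle f,f\rangle$. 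Hence $\overline{\pi}_{\alpha}(m) \in \L(\ell^{2}(\Gamma^{+},A))$, a direct inner-product calculation gives the adjoint $\overline{\pi}_{\alpha}(m^{*})$, and $\overline{\pi}_{\alpha}: M(A) \to \L(\ell^{2}(\Gamma^{+},A))$ is manifestly a $*$-homomorphism extending $\pi_{\alpha}$.

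Next I would verify the two covariance-type equations pointwise on $\ell^{2}(\Gamma^{+},A)$. For $i\ge t$ the semigroup identity $\overline{\alpha}_{i} = \overline{\alpha}_{i-t}\circ\overline{\alpha}_{t}$ on $M(A)$ (which follows from $\alpha_{i-t}\circ\alpha_{t}=\alpha_{i}$ by uniqueness of strict extension) yields
\[ (\overline{\pi}_{\alpha}(m) S_{t} f)(i) = \overline{\alpha}_{i}(m) f(i-t) = \overline{\alpha}_{i-t}(\overline{\alpha}_{t}(m)) f(i-t) = (S_{t}\overline{\pi}_{\alpha}(\overline{\alpha}_{t}(m)) f)(i), \]
while both sides vanish for $i<t$. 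The second equation is automatic, because $1-S_{t}S_{t}^{*}$ is the projection onto functions supported on $\{i : i<t\}$ and $\overline{\pi}_{\alpha}(m)$ acts by pointwise multiplication, which commutes with any pointwise projection.

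With the first equation in hand, the multiplier property is one line: on a typical spanning element,
\[ \overline{\pi}_{\alpha}(m)\,S_{x}\pi_{\alpha}(a)S_{y}^{*} = S_{x}\overline{\pi}_{\alpha}(\overline{\alpha}_{x}(m))\pi_{\alpha}(a)S_{y}^{*} = S_{x}\pi_{\alpha}(\overline{\alpha}_{x}(m)a)S_{y}^{*} \in \T_{\alpha}, \]
since $\overline{\alpha}_{x}(m) a \in A$; taking adjoints handles right multiplication symmetrically, so $\overline{\pi}_{\alpha}(m) \in M(\T_{\alpha})$. Strict continuity of $\overline{\pi}_{\alpha}$ is then inherited from that of each $\overline{\alpha}_{x}$: if $m_{\lambda} \to m$ strictly in $M(A)$, then $\overline{\alpha}_{x}(m_{\lambda})a \to \overline{\alpha}_{x}(m)a$ in norm for every $a\in A$, and the identity above shows $\overline{\pi}_{\alpha}(m_{\lambda}) b \to \overline{\pi}_{\alpha}(m) b$ in norm on each spanning element $b$ of $\T_{\alpha}$, extending to arbitrary $b\in\T_{\alpha}$ by density. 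The only place a subtlety can intrude is the multiplier-property step: one must push $\overline{\pi}_{\alpha}(m)$ past $S_{x}$ \emph{first}, so that what lands next to $\pi_{\alpha}(a)$ is the $A$-valued element $\overline{\alpha}_{x}(m)a$, allowing $\pi_{\alpha}$ (rather than its extension) to close the computation inside $\T_{\alpha}$.
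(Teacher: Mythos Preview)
Your argument is correct and reaches the same conclusion, but it proceeds along a different path from the paper. The paper does not construct $\overline{\pi}_{\alpha}$ directly; instead it invokes the extendibility criterion used in \cite{Adji1,Larsen}: it exhibits a projection $p\in M(\T_{\alpha})$ (defined by $(p f)(t)=\overline{\alpha}_{t}(1)f(t)$), checks that $p\in\L(\ell^{2}(\Gamma^{+},A))$ multiplies $\T_{\alpha}$ into itself, and then shows that $\pi_{\alpha}(a_{\lambda})\to p$ strictly in $M(\T_{\alpha})$ for any approximate identity $(a_{\lambda})$ of $A$. This yields the strictly continuous extension $\overline{\pi}_{\alpha}$ abstractly, with $\overline{\pi}_{\alpha}(1_{M(A)})=p$; the two covariance equations are then deduced by passing the known equations for $\pi_{\alpha}$ through strict limits.

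Your route is more elementary and self-contained: you write down $\overline{\pi}_{\alpha}(m)$ for every $m\in M(A)$ by the same pointwise formula, check directly that it lands in $\L(\ell^{2}(\Gamma^{+},A))$ and in $M(\T_{\alpha})$, and verify the covariance equations by direct calculation on $\ell^{2}(\Gamma^{+},A)$. This avoids any appeal to the abstract extendibility machinery, and note that your $\overline{\pi}_{\alpha}(1_{M(A)})$ is exactly the paper's $p$, so nothing is lost for the sequel. The one small point to tidy is the density step in your strict-continuity argument: to pass from spanning elements to all of $\T_{\alpha}$ you need the net $(\overline{\pi}_{\alpha}(m_{\lambda}))$ bounded, so the statement should be read (as is standard here) as strict continuity on bounded subsets of $M(A)$.
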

\begin{proof}
We want to find a projection $p\in M(\T_{\alpha})$ such that
$\pi_{\alpha}(a_{\lambda})$ converges strictly to $p$ in $M(\T_{\alpha})$ for an approximate identity $(a_{\lambda})$ in $A$.

Consider the map $p$ defined on $\ell^{2}(\Gamma^{+},A)$ by
\[ (p (f))(t)=\overline{\alpha}_{t}(1)f(t). \]
First we clarify that $p(f)$ belongs to $\ell^{2}(\Gamma^{+},A)$ for all $f\in \ell^{2}(\Gamma^{+},A)$.
Let $t\in\Gamma^{+}$, then we have
\begin{align*}
(p(f))(t)^{*}(p(f))(t)=(\overline{\alpha}_{t}(1)f(t))^{*}(\overline{\alpha}_{t}(1)f(t))=f(t)^{*}\overline{\alpha}_{t}(1)f(t).
\end{align*}
Since $\overline{\alpha}_{t}(1)$ is a positive element of $M(A)$, it follows that
\[ f(t)^{*}\overline{\alpha}_{t}(1)f(t)\le \|\overline{\alpha}_{t}(1)\|f(t)^{*} f(t)\le f(t)^{*} f(t). \]
Consequently  $0\le \sum_{t\in F} (p(f))(t)^{*}p(f)(t)\le \sum_{t\in F}f(t)^{*} f(t)$ for every finite set $F\subset \Gamma^{+}$.
Moreover we know that  the sequence of partial sums of $\sum_{t\in\Gamma^{+}} f(t)^{*} f(t)$ is Cauchy  in $A$
because $f\in \ell^{2}(\Gamma^{+},A)$.
Therefore $\sum_{t\in\Gamma^{+}} (p(f))(t)^{*}p(f)(t)$ converges in $A$, and hence $p(f)\in \ell^{2}(\Gamma^{+},A)$.

On can see from the definition of $p$ that it is a linear map, and the computations below show it is adjointable, which particularly it satisfies $p^{*}=p$ and $p^{2}=p$. So $p$ is a projection in $\L(\ell^{2}(\Gamma^{+},A))$:
\begin{align*}
\langle p(f),g\rangle & = \sum_{t\in\Gamma^{+}} (p(f)(t))^{*}g(t) = \sum_{t\in\Gamma^{+}} (\overline{\alpha}_{t}(1)f(t))^{*}g(t)
= \sum_{t\in\Gamma^{+}} f(t)^{*}\overline{\alpha}_{t}(1)g(t) \\
& = \sum_{t\in\Gamma^{+}} f(t)^{*} (p(g)(t)) = \langle f, p(g)\rangle.
\end{align*}
To see that $p$ belongs to $M(\T_{\alpha})$,
a direct computation on every $f\in \ell^{2}(\Gamma^{+},A)$ shows that
$[(p~(S_{x}\pi_{\alpha}(a)S_{y}^{*}))~f](t)=[S_{x}\pi_{\alpha}(\overline{\alpha}_{x}(1)a)S_{y}^{*}~ f](t)$ and
$[((S_{x}\pi_{\alpha}(a)S_{y}^{*})~p)~f](t)=[S_{x}\pi_{\alpha}(a \overline{\alpha}_{y}(1))S_{y}^{*}~ f](t)$.
Thus $p$ multiples every spanning element of $\T_{\alpha}$ into itself, so $p\in M(\T_{\alpha})$.

Now we want to prove that $(\pi_{\alpha}(a_{\lambda}))_{\lambda\in\Lambda}$ converges strictly to $p$ in $M(\T_{\alpha})$.
For this we show that $\pi_{\alpha}(a_{\lambda})S_{x}\pi_{\alpha}(a)S_{y}^{*}$ and $S_{x}\pi_{\alpha}(a)S_{y}^{*}\pi_{\alpha}(a_{\lambda})$ converge in
$\T_{\alpha}$ to $p~S_{x}\pi_{\alpha}(a)S_{y}^{*}$ and $S_{x}\pi_{\alpha}(a)S_{y}^{*}~p$ respectively.
Note that $\pi_{\alpha}(a_{\lambda})S_{x}\pi_{\alpha}(a)S_{y}^{*}=S_{x}\pi_{\alpha}(\alpha_{x}(a_{\lambda})a)S_{y}^{*}\in\T_{\alpha}$ and
$S_{x}\pi_{\alpha}(a)S_{y}^{*}\pi_{\alpha}(a_{\lambda})=S_{x}\pi_{\alpha}(a\alpha_{y}(a_{\lambda}))S_{y}^{*} \in \T_{\alpha}$.
Since $\alpha_{x}(a_{\lambda})a\rightarrow \overline{\alpha}_{x}(1)a$  in $A$ by the extendibility of $\alpha_{x}$, it follows that
$S_{x}\pi_{\alpha}(\alpha_{x}(a_{\lambda})a)S_{y}^{*} \rightarrow S_{x}\pi_{\alpha}(\overline{\alpha}_{x}(1)a) S_{y}^{*}=p~(S_{x}\pi_{\alpha}(a)S_{y}^{*})$ and $S_{x}\pi_{\alpha}(a\alpha_{y}(a_{\lambda}))S_{y}^{*}\rightarrow S_{x}\pi_{\alpha}(a\overline{\alpha}_{y}(1))S_{y}^{*}=
(S_{x}\pi_{\alpha}(a)S_{y}^{*})~p$ in $\T_{\alpha}$.
Thus we have shown that $\pi_{\alpha}$ is extendible, and therefore we have $\overline{\pi}_{\alpha}(1_{M(A)})=p$.

Next we want to clarify the equation $\overline{\pi}_{\alpha}(m)S_{x}=S_{x}\overline{\pi}_{\alpha}(\overline{\alpha}_{x}(m))$ in $M(\T_{\alpha})$.
Let $(a_{\lambda})$ be an approximate identity for $A$. The extendibility of $\pi_{\alpha}$ implies $\pi_{\alpha}(a_{\lambda}m)\rightarrow \overline{\pi}_{\alpha}(m)$ strictly in $M(\T_{\alpha})$, and hence
$\pi_{\alpha}(a_{\lambda}m)S_{x}\rightarrow \overline{\pi}_{\alpha}(m)S_{x}$ strictly in $M(\T_{\alpha})$.
But $\pi_{\alpha}(a_{\lambda}m)S_{x}=S_{x}\pi_{\alpha}(\alpha_{x}(a_{\lambda}m))$ converges strictly to $S_{x}\overline{\pi}_{\alpha}(\overline{\alpha}_{x}(m))$ in $M(\T_{\alpha})$.
Therefore $\overline{\pi}_{\alpha}(m)S_{x}=S_{x}\overline{\pi}_{\alpha}(\overline{\alpha}_{x}(m))$.
Similar arguments show that
$\overline{\pi}_{\alpha}(m)(1-S_{t}S_{t}^{*})=(1-S_{t}S_{t}^{*})\overline{\pi}_{\alpha}(m)$ in $M(\T_{\alpha})$.
\end{proof}

We have already shown that $\pi_{\alpha}:A\rightarrow M(\T_{\alpha})$ is extendible in Lemma \ref{pi-alpha-bar}.
Therefore we have a projection $\overline{\pi}_{\alpha}(1_{M(A)})=p$ in $M(\T_{\alpha})$.
Note that $p$ is the identity of $pM(\T_{\alpha})p$, and
$\pi_{\alpha}(a)=\pi_{\alpha}(1_{M(A)}a1_{M(A)})=p\pi_{\alpha}(a)p\in pM(\T_{\alpha})p$.
We claim that the homomorphism $\pi_{\alpha}:A\rightarrow pM(\T_{\alpha})p$ is non degenerate.
To see this, let $(a_{\lambda})$ be an approximate identity for $A$, and $\xi:=S_{x}\pi_{\alpha}(b)S_{y}^{*}$.
Then $\pi_{\alpha}(a_{\lambda})p\xi p=S_{x}\pi_{\alpha}(\alpha_{x}(a_{\lambda})b)S_{y}^{*}p$ converges to
$S_{x}\pi_{\alpha}(\overline{\alpha}_{x}(1)b)S_{y}p=p\xi p$ in $p\T_{\alpha}p$.
Similar arguments show that $p\xi p \pi_{\alpha}(a_{\lambda})\rightarrow p\xi p$ in $p\T_{\alpha}p$.

In the next proposition we show that the algebra $p\T_{\alpha}p$ is a partial-isometric crossed product of $(A,\Gamma^{+},\alpha)$.

\begin{prop}\label{piso-ptp}
Suppose $(A,\Gamma^{+},\alpha)$ is a system such that every $\alpha_{x}\in \End(A)$ is extendible.
Let $p=\overline{\pi}_{\alpha}(1_{M(A)})$, and let
\[ k_{A}: A \rightarrow  p\T_{\alpha}p \quad \text{and} \quad w:\Gamma^{+} \rightarrow M(p\T_{\alpha}p)\]
be the maps defined by
\[ k_{A}(a)= \pi_{\alpha}(a) \quad \text{and} \quad w_{x}=pS_{x}^{*}p. \]
Then the triple $(p\T_{\alpha}p,k_{A},w)$ is a partial-isometric crossed product of $(A,\Gamma^{+},\alpha)$, and
therefore $\psi:=k_{A}\times w: (A\times_{\alpha}^{\piso}\Gamma^{+},i_{A},v) \rightarrow p\T_{\alpha}p$ is an isomorphism which satisfies
$\psi(i_{A}(a))=k_{A}(a)$ and $\psi(v_{x})=w_{x}$.
Moreover, the crossed product $A\times_{\alpha}^{\piso}\Gamma^{+}$ is Morita equivalent to the algebra $\T_{\alpha}$.
\end{prop}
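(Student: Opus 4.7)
My plan is to verify the three defining conditions of Definition~\ref{def} for $(p\T_{\alpha}p,k_{A},w)$, then derive the isomorphism $\psi=k_{A}\times w$ and the Morita equivalence with $\T_{\alpha}$. First I would check that $(k_{A},w)$ is a covariant partial-isometric representation of $(A,\Gamma^{+},\alpha)$ in $p\T_{\alpha}p$. Non-degeneracy of $k_{A}=\pi_{\alpha}$ has already been established before the statement. A direct computation on $\ell^{2}(\Gamma^{+},A)$, using the inequality $\overline{\alpha}_{t+s}(1)\le\overline{\alpha}_{t}(1)$ to absorb the leftmost $p$, gives $(w_{s}f)(t)=\overline{\alpha}_{t+s}(1)f(t+s)$, from which $w_{s+t}=w_{s}w_{t}$ and $w_{s}w_{s}^{*}w_{s}=w_{s}$ follow immediately. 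For the covariance identities in the form of Lemma~4.3 of \cite{LR}, the adjoint of the intertwining relations from Lemma~\ref{pi-alpha-bar}, together with $p\pi_{\alpha}(a)=\pi_{\alpha}(a)$, yields $w_{s}w_{s}^{*}=\overline{\pi}_{\alpha}(\overline{\alpha}_{s}(1))=\overline{k_{A}}(\overline{\alpha}_{s}(1))$ and $k_{A}(\alpha_{s}(a))w_{s}=\pi_{\alpha}(\alpha_{s}(a))S_{s}^{*}p=S_{s}^{*}\pi_{\alpha}(a)p=pS_{s}^{*}p\,\pi_{\alpha}(a)=w_{s}k_{A}(a)$. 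The spanning condition follows from $w_{s}^{*}k_{A}(a)w_{t}=pS_{s}\pi_{\alpha}(a)S_{t}^{*}p$ together with the definition~(\ref{t-alpha}) of $\T_{\alpha}$. The universal property of $A\times_{\alpha}^{\piso}\Gamma^{+}$ then produces a nondegenerate homomorphism $\psi=k_{A}\times w$, surjective by the spanning condition.

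The main obstacle is injectivity of $\psi$, which I would establish via a Hilbert-space dilation. Choose a faithful nondegenerate representation $\rho:A\to B(K)$; the associated interior tensor product furnishes a faithful representation $\L(\ell^{2}(\Gamma^{+},A))\to B(\ell^{2}(\Gamma^{+})\otimes K)$. Under it, $p$ is sent to the projection $P$ with range $H=\{f\in\ell^{2}(\Gamma^{+},K):f(t)\in\overline{\rho}(\overline{\alpha}_{t}(1))K\}$, and compressing yields a faithful representation $\tau:p\T_{\alpha}p\to B(H)$. The dilated covariant pair $(\tau\circ k_{A},\overline{\tau}\circ w)$ has $V_{s}$ acting by $(V_{s}f)(t)=\overline{\rho}(\overline{\alpha}_{t+s}(1))f(t+s)$, so the range of $1_{H}-V_{s}^{*}V_{s}=\ker V_{s}$ consists of those $f\in H$ with $f(u)=0$ for all $u\ge s$. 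Evaluating at $u=0$, where $\overline{\alpha}_{0}(1)=1$ lets $f(0)$ range over all of $K$, the operator $\tau(k_{A}(a))$ acts by $\rho(a)f(0)$; faithfulness of $\rho$ thus yields faithfulness of $\tau\circ k_{A}$ on that range for each $s>0$. Theorem~4.8 of \cite{LR} then shows $\tau\circ\psi$ is faithful, and since $\tau$ is faithful, $\psi$ is injective.

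Finally, for the Morita equivalence, I would show that $p$ is a full projection in $M(\T_{\alpha})$, i.e.\ $\overline{\T_{\alpha}p\T_{\alpha}}=\T_{\alpha}$. For any $a\in A$ and approximate identity $(a_{\lambda})$ in $A$, the identity $\pi_{\alpha}(a_{\lambda})\,p\,\pi_{\alpha}(a)=\pi_{\alpha}(a_{\lambda}a)$ exhibits $\pi_{\alpha}(a)$ as a norm-limit of elements of $\T_{\alpha}p\T_{\alpha}$; multiplying on the left by $S_{x}$ and on the right by $S_{y}^{*}$, every spanning element $S_{x}\pi_{\alpha}(a)S_{y}^{*}$ of $\T_{\alpha}$ lies in $\overline{\T_{\alpha}p\T_{\alpha}}$. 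Hence $p$ is full, and the standard full-corner construction yields a Morita equivalence between $p\T_{\alpha}p\cong A\times_{\alpha}^{\piso}\Gamma^{+}$ and $\T_{\alpha}$ via the imprimitivity bimodule $\T_{\alpha}p$.
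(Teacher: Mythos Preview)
Your proof is correct and follows essentially the same route as the paper: verify covariance of $(k_{A},w)$, prove injectivity of $k_{A}\times w$ via Theorem~4.8 of \cite{LR} by showing $k_{A}$ acts faithfully on the range of each $1-w_{s}^{*}w_{s}$, check the spanning condition, and establish fullness of $p$ in $M(\T_{\alpha})$. The only difference is organizational: the paper additionally constructs, for each covariant pair $(\rho,W)$, the representation $\Phi$ of $p\T_{\alpha}p$ explicitly (using the already-proved injectivity of $\varphi$ to obtain the norm estimate that makes $\Phi$ well defined), thereby verifying condition~(ii) of Definition~\ref{def} directly, whereas you deduce the universal property by transport of structure once $\psi$ is known to be an isomorphism---a mild but legitimate streamlining.
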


Before we prove the proposition, we show the following lemma.

\begin{lemma}\label{phi-injective}
The pair $(k_{A},w)$ forms a covariant partial-isometric representation of  $(A,\Gamma^{+},\alpha)$ in $p\T_{\alpha}p$, and that
the homomorphism $\varphi:= k_{A}\times w: A\times_{\alpha}^{\piso}\Gamma^{+} \rightarrow p\T_{\alpha}p$ is injective.
\end{lemma}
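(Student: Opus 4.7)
The plan is to first verify that $(k_A,w)$ satisfies the axioms of a covariant partial-isometric representation taking values in $p\T_\alpha p$ (with $w$ valued in $M(p\T_\alpha p)$), and then deduce injectivity of the integrated homomorphism $\varphi$ from the faithfulness criterion of \cite[Theorem~4.8]{LR}. Non-degeneracy of $k_A=\pi_\alpha$ as a map into $p\T_\alpha p$ has already been noted in the paragraph preceding the proposition.

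For the partial-isometric covariance, the key observation is that Lemma~\ref{pi-alpha-bar} together with $p\,\overline{\pi}_\alpha(m)=\overline{\pi}_\alpha(m)=\overline{\pi}_\alpha(m)\,p$ (for $m\in M(A)$) allows me to rewrite
\[ w_x=pS_x^*p=\overline{\pi}_\alpha(\overline{\alpha}_x(1))\,S_x^*,\qquad w_x^*=S_x\,\overline{\pi}_\alpha(\overline{\alpha}_x(1)). \]
From this I would verify $w_xw_y=w_{x+y}$ by pushing $S_x^*$ past $\overline{\pi}_\alpha(\overline{\alpha}_y(1))$ via $S_x^*\overline{\pi}_\alpha(m)=\overline{\pi}_\alpha(\overline{\alpha}_x(m))S_x^*$, using $S_x^*S_y^*=S_{x+y}^*$, and collapsing the product of projections $\overline{\alpha}_x(1)\overline{\alpha}_{x+y}(1)=\overline{\alpha}_{x+y}(1)$ (which holds since $\overline{\alpha}_{x+y}(1)=\overline{\alpha}_x(\overline{\alpha}_y(1))\le\overline{\alpha}_x(1)$). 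The identity $w_xw_x^*w_x=w_x$ then drops out immediately from $S_x^*S_x=1$ and idempotence of $\overline{\alpha}_x(1)$. The covariance identity $w_sk_A(a)w_s^*=k_A(\alpha_s(a))$ follows by commuting $S_s^*$ past $\pi_\alpha(a)$ and simplifying $\overline{\alpha}_s(1)\alpha_s(a)\overline{\alpha}_s(1)=\alpha_s(a)$. For $w_s^*w_s\,k_A(a)=k_A(a)\,w_s^*w_s$, I would first compute $w_s^*w_s=pS_sS_s^*$ (using $\overline{\pi}_\alpha(\overline{\alpha}_s(1))S_s^*=S_s^*p$) and then invoke the commutation of $S_sS_s^*$ with $\pi_\alpha(A)$ supplied by \eqref{pii-S}, together with $\pi_\alpha(a)p=\pi_\alpha(a)=p\pi_\alpha(a)$.

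For injectivity of $\varphi$, I would pick any faithful non-degenerate representation $\rho\colon p\T_\alpha p\to B(\H)$; extending $\rho$ to $M(p\T_\alpha p)$, the pair $(\rho\circ k_A,\overline{\rho}\circ w)$ is a covariant partial-isometric representation of $(A,\Gamma^+,\alpha)$ on $\H$ whose integrated form is $\rho\circ\varphi$. By the equivalent formulation of \cite[Theorem~4.8]{LR} recalled in the Remark after Definition~\ref{def}, this integrated representation is faithful as soon as $(\rho\circ k_A)(a)\bigl(1-(\overline{\rho}\circ w)_s^*(\overline{\rho}\circ w)_s\bigr)=0\Longrightarrow a=0$ for every $s>0$. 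Since $\rho$ is faithful, this reduces to the $C^*$-algebraic implication $k_A(a)(1-w_s^*w_s)=0\Rightarrow a=0$, where ``$1$'' denotes $p=1_{M(p\T_\alpha p)}$. Using the formula $w_s^*w_s=pS_sS_s^*$ and $\pi_\alpha(a)p=\pi_\alpha(a)$, one computes $k_A(a)(1-w_s^*w_s)=\pi_\alpha(a)(1-S_sS_s^*)$ in $\L(\ell^2(\Gamma^+,A))$. Evaluating this operator on a function $f\in\ell^2(\Gamma^+,A)$ with $f(0)=b$ and $f(t)=0$ for $t>0$ (which lies in the range of $1-S_sS_s^*$ whenever $s>0$) yields the value $ab$ at the coordinate $0$, forcing $ab=0$ for every $b\in A$, and hence $a=0$.

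The main obstacle is the bookkeeping forced by the fact that $p$ does not commute with $S_x$: one has the clean identity $S_x^*p=\overline{\pi}_\alpha(\overline{\alpha}_x(1))S_x^*$ but no corresponding identity for $pS_x^*$, so one must keep careful track of which side of $S_x^*$ the projection $p$ is on throughout the semigroup and covariance computations. Once that is absorbed, the injectivity reduction is essentially immediate.
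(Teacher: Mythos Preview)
Your proposal is correct and follows essentially the same route as the paper's proof: verify the partial-isometric covariance identities via the formula $w_x=\overline{\pi}_\alpha(\overline{\alpha}_x(1))S_x^*$, then apply the faithfulness criterion of \cite[Theorem~4.8]{LR} after passing through a faithful representation of $p\T_\alpha p$, reducing to $\pi_\alpha(a)(1-S_sS_s^*)=0\Rightarrow a=0$ and testing on a function supported at $0$. The only cosmetic differences are that the paper evaluates at $f(0)=a^*$ (yielding $aa^*=0$) rather than at an arbitrary $b$, and computes $w_s^*w_s k_A(a)$ directly rather than first simplifying $w_s^*w_s=pS_sS_s^*$; neither changes the argument.
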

\begin{proof}
Each of $w_{x}$ is a partial isometry: $w_{x}=pS_{x}^{*}p=\overline{\pi}_{\alpha}(\overline{\alpha}_{x}(1))S_{x}^{*}\Rightarrow
w_{x} w_{x}^{*} w_{x}=\overline{\pi}_{\alpha}(\overline{\alpha}_{x}(1))S_{x}^{*}=w_{x}$,
and
\[ w_{x}w_{y} = \overline{\pi}_{\alpha}(\overline{\alpha}_{x}(1))S_{x}^{*}\overline{\pi}_{\alpha}(\overline{\alpha}_{y}(1))S_{y}^{*}
 = \overline{\pi}_{\alpha}(\overline{\alpha}_{x}(1))\overline{\pi}_{\alpha}(\overline{\alpha}_{x+y}(1))S_{x+y}^{*}
 = w_{x+y} \quad \text{for } x,y \in\Gamma^{+}.
\]
The computations below show that $(k_{A},w)$ satisfies the partial-isometric covariance relations:
\begin{align*}
w_{x}k_{A}(a)w_{x}^{*} & =\overline{\pi}_{\alpha}(\overline{\alpha}_{x}(1))S_{x}^{*}[\pi_{\alpha}(a)S_{x}]\overline{\pi}_{\alpha}(\overline{\alpha}_{x}(1)) \\
 & = \overline{\pi}_{\alpha}(\overline{\alpha}_{x}(1))\pi_{\alpha}(\alpha_{x}(a))\overline{\pi}_{\alpha}(\overline{\alpha}_{x}(1))
 = \pi_{\alpha}(\alpha_{x}(a)) =k_{A}(\alpha_{x}(a)),
\end{align*}
and
\begin{align*}
w_{x}^{*}w_{x}k_{A}(a) & =S_{x}\overline{\pi}_{\alpha}(\overline{\alpha}_{x}(1))S_{x}^{*}\pi_{\alpha}(a)
 = S_{x} \pi_{\alpha}(\overline{\alpha}_{x}(1)\alpha_{x}(a))S_{x}^{*} \\
& = S_{x} \pi_{\alpha}(\alpha_{x}(a)\overline{\alpha}_{x}(1))S_{x}^{*}
 =S_{x} \pi_{\alpha}(\alpha_{x}(a))\overline{\pi}_{\alpha}(\overline{\alpha}_{x}(1))S_{x}^{*}\\
 & = \pi_{\alpha}(a)S_{x}\overline{\pi}_{\alpha}(\overline{\alpha}_{x}(1))S_{x}^{*}= \pi_{\alpha}(a)w_{x}^{*}w_{x}= k_{A}(a)w_{x}^{*}w_{x}.
\end{align*}

So we get a non degenerate homomorphism $\varphi:=k_{A}\times w: A\times_{\alpha}^{\piso}\Gamma^{+} \rightarrow p\T_{\alpha}p$.
We want to see it is injective.
Put $p\T_{\alpha}p$ by a faithful and non degenerate representation $\gamma$ into a Hilbert space $H$.
Then we want to prove that the representation $\gamma\circ \varphi$ of $(A\times_{\alpha}^{\piso}\Gamma^{+},i_{A},v)$ on $H$ is faithful.
Let $\sigma=\gamma\circ \varphi\circ i_{A}$ and $t=\overline{\gamma\circ \varphi}\circ v$.
By \cite[Theorem 4.8]{LR}, we have to show that $\sigma$ acts faithfully on the range of $(1-t_{x}^{*}t_{x})$ for every $x>0$ in $\Gamma^{+}$.
If $x>0$ in $\Gamma^{+}$, $a\in A$, and $\sigma(a)|_{{\rm range}(1-t_{x}^{*}t_{x})}=0$,
then we want to see that $a=0$.
First note that $\sigma(a)(1-t_{x}^{*}t_{x})=\gamma\circ\varphi(i_{A}(a)(1-v_{x}^{*}v_{x}))$, and
\begin{align*}
\varphi(i_{A}(a)(1-v_{x}^{*}v_{x})) & =\varphi(i_{A}(a))(\overline{\varphi}(1)-\varphi(v_{x}^{*}v_{x}))
=\varphi(i_{A}(a))(p-\overline{\varphi}(v_{x}^{*})\overline{\varphi}(v_{x})) \\
& = k_{A}(a)(p-w_{x}^{*}w_{x}) \\
& = \pi_{\alpha}(a)(\overline{\pi}_{\alpha}(1)-S_{x}\overline{\pi}_{\alpha}(\overline{\alpha}_{x}(1))\overline{\pi}_{\alpha}(\overline{\alpha}_{x}(1))S_{x}^{*})\\
& =\pi_{\alpha}(a)(\overline{\pi}_{\alpha}(1)-\overline{\pi}_{\alpha}(1)S_{x}S_{x}^{*}\overline{\pi}_{\alpha}(1)) \\
& =\pi_{\alpha}(a)(1-S_{x}S_{x}^{*})\overline{\pi}_{\alpha}(1) =\pi_{\alpha}(a)\overline{\pi}_{\alpha}(1)(1-S_{x}S_{x}^{*}) \\
& =\pi_{\alpha}(a)(1-S_{x}S_{x}^{*}).
\end{align*}
So $\sigma(a)(1-t_{x}^{*}t_{x})=0$ implies $\pi_{\alpha}(a)(1-S_{x}S_{x}^{*})=0$ in $\L(\ell^{2}(\Gamma^{+},A))$.
But for $f\in \ell^{2}(\Gamma^{+},A)$, we have
\[ ((1-S_{x}S_{x}^{*})f)(y)=\left\{\begin{array}{ll} 0 & \text{ for } y\ge x >0 \\ f(y) & \text{ for } y< x. \end{array}\right.
\]
Thus evaluating the operator $\pi_{\alpha}(a)(1-S_{x}S_{x}^{*})$ on a chosen element $f\in \ell^{2}(\Gamma^{+},A)$ where
$f(y)=a^{*}$ for $y=0$ and $f(y)=0$ for $y\neq 0$,
we get
\begin{align*}
(\pi_{\alpha}(a)(1-S_{x}S_{x}^{*})(f))(y) & =
\left\{\begin{array}{ll} \alpha_{y}(a)f(y) & \text{ for } y=0 \\ 0 & \text{ for } y\neq 0 \end{array}\right.
 = \left\{\begin{array}{ll} aa^{*} & \text{ for } y=0 \\ 0 & \text{ for } y\neq 0. \end{array}\right.
\end{align*}
Therefore $aa^{*}=0\in A$, and hence $a=0$.
\end{proof}

\begin{proof}[Proof of Proposition \ref{piso-ptp}]
Let $(\rho,W)$ be a covariant partial-isometric representation of $(A,\Gamma^{+},\alpha)$ on a Hilbert space $H$.
We want to construct a non degenerate representation $\Phi$ of $p\T_{\alpha}p$ on $H$ such that
$\Phi(p S_{i}\pi_{\alpha}(a)S_{j}^{*}p)=W_{i}^{*}\rho(a)W_{j}$ for all $a\in A,i,j\in\Gamma^{+}$.
It follows from this equation that $\Phi(k_{A}(a))=\rho(a)$ for all $a\in A$, and
$\overline{\Phi}(w_{i})=W_{i}$ for $i\in\Gamma^{+}$ because
$\Phi(p\pi_{\alpha}(a_{\lambda})S_{i}^{*}p)=\rho(a_{\lambda})W_{i}$ for all $i\in\Gamma^{+}$, $\rho(a_{\lambda})W_{i}$ converges strongly to $W_{i}$ in $B(H)$,
and
\begin{align*}
\Phi(p\pi_{\alpha}(a_{\lambda})S_{i}^{*}p) & =\Phi(\pi_{\alpha}(a_{\lambda}))\overline{\Phi}(pS_{i}^{*}p)  =\rho(a_{\lambda})\overline{\Phi}(pS_{i}^{*}p)
 \rightarrow \overline{\Phi}(pS_{i}^{*}p)  \text{ strongly in } B(H).
\end{align*}

So we want the representation $\Phi$ to satisfy
\[ \Phi\left(\sum \lambda_{i,j} p S_{i}\pi_{\alpha}(a_{i,j})S_{j}^{*}p\right) =
\sum \lambda_{i,j}\Phi(p S_{i}\pi_{\alpha}(a_{i,j})S_{j}^{*}p)=
\sum \lambda_{i,j}W_{i}^{*}\rho(a_{i,j})W_{j}.\]
We prove that this formula gives a well-defined linear map $\Phi$ on
$\newspan\{pS_{i}\pi_{\alpha}(a)S_{j}^{*}p : a\in A, i,j \in\Gamma^{+}\}$, and simultaneously $\Phi$ extends to $p\T_{\alpha}p$ by
showing that
\[ \left\|\sum \lambda_{i,j}W_{i}^{*}\rho(a_{i,j})W_{j}\right\|\le \left\|\sum \lambda_{i,j} p S_{i}\pi_{\alpha}(a_{i,j})S_{j}^{*}p\right\|. \]
Note that the non degenerate representation $\rho\times W$ of $(A\times_{\alpha}^{\piso}\Gamma^{+},i_{A},v)$ on $H$ satisfies
$\rho\times W(v_{i}^{*}i_{A}(a)v_{j})=W_{i}^{*}\rho(a)W_{j}$, and the injective homomorphism $\varphi:(A\times_{\alpha}^{\piso}\Gamma^{+},i_{A},v)\rightarrow p\T_{\alpha}p$ in Lemma \ref{phi-injective}
satisfies $\varphi(v_{i}^{*}i_{A}(a)v_{j})=w_{i}^{*}k_{A}(a)w_{j}=pS_{i}\pi_{\alpha}(a)S_{j}^{*}p$.
Now we compute
\begin{align*}
\left\|\sum_{i,j\in\Gamma^{+}} \lambda_{i,j}W_{i}^{*}\rho(a_{i,j})W_{j}\right\| & = \left\|\rho\times W\left(\sum \lambda_{i,j}v_{i}^{*}i_{A}(a_{i,j})v_{j}\right)\right\| \\
& \le \left\|\sum \lambda_{i,j}v_{i}^{*}i_{A}(a_{i,j})v_{j}\right\| \\
& =   \left\|\varphi\left(\sum\lambda_{i,j}v_{i}^{*}i_{A}(a_{i,j})v_{j}\right)\right\| \quad \text{by injectivity of } \varphi \\
& =  \left\|\sum \lambda_{i,j}pS_{i}\pi_{\alpha}(a_{i,j})S_{j}^{*}p\right\|.
\end{align*}

Next we verify that $\Phi$ is a *-homomorphism.
It certainly preserves the adjoint, and we claim by our arguments below that it also preserves the multiplication.
Note that
\[ \xi:= (pS_{i}\pi_{\alpha}(a)S_{j}^{*}p)~(pS_{n}\pi_{\alpha}(b)S_{m}^{*}p)
=\left\{\begin{array}{ll}
pS_{i}\pi_{\alpha}(a\overline{\alpha}_{j}(1)b)S_{m}^{*}p &  \text{ for } j=n \\
pS_{i}\pi_{\alpha}(a\alpha_{j-n}(\overline{\alpha}_{n}(1)b))S_{j-n+m}^{*}p &  \text{ for } j>n \\
pS_{i+n-j}\pi_{\alpha}(\alpha_{n-j}(a)\overline{\alpha}_{n}(1)b)S_{m}^{*} p & \text{ for } j<n.
\end{array}\right.
\]
Then use the covariance of $(\rho,W)$ to see that $\Phi(\xi)=(W_{i}^{*}\rho(a)W_{j})(W_{n}^{*}\rho(a)W_{m})$ for all cases of $j$ and $n$.
So $\Phi$ preserves the multiplication. Thus $\Phi$ is a representation of $p\T_{\alpha}p$ on $H$.

We want to see that $\Phi$ is non degenerate.
The representation $\rho$ of $A$ is non degenerate and $\rho(a)=\Phi(\pi_{\alpha}(a))$, therefore
\begin{align*}
H & = \overline{\newspan}\{\rho(a)h: a\in A, h\in H\}
& \subset
\overline{\newspan}\{\Phi(pS_{i}\pi_{\alpha}(a)S_{j}^{*}p)h:a\in A,i,j\in\Gamma^{+},h\in H\},
\end{align*}
so $\Phi$ is non-degenerate.
The $C^*$-algebra $p\T_{\alpha}p$ is spanned by $\{w_{i}^{*}i_{A}(a)w_{j}: a\in A,i,j \in\Gamma^{+}\}$ because
$w_{i}^{*}i_{A}(a)w_{j}=pS_{i}p\pi_{\alpha}(a)pS_{j}^{*}p=pS_{i}\pi_{\alpha}(a)S_{j}^{*}p$.
Thus $p\T_{\alpha}p$ and $A\times_{\alpha}^{\piso}\Gamma^{+}$ are isomorphic.

Finally we prove the fullness of $A\times_{\alpha}^{\piso}\Gamma^{+}$ in $\T_{\alpha}$.
It is enough by \cite[Example 3.6]{RW} to show that $\T_{\alpha}p\T_{\alpha}$ is dense in $\T_{\alpha}=\overline{\newspan}\{S_{i}\pi_{\alpha}(a)S_{j}^{*} : i,j\in\Gamma^{+}, a\in A\}$.
Take a spanning element $S_{i}\pi_{\alpha}(a)S_{j}^{*}\in \T_{\alpha}$ and an approximate identity $(a_{\lambda})$ for $A$.
Then $S_{i}\pi_{\alpha}(a)S_{j}^{*}=\lim_{\lambda} S_{i}\pi_{\alpha}(aa_{\lambda})S_{j}^{*}$, and since
$S_{i}\pi_{\alpha}(aa_{\lambda})S_{j}^{*}=S_{i}\pi_{\alpha}(a)S_{0}^{*}pS_{0}\pi_{\alpha}(a_{\lambda})S_{j}^{*}\in \T_{\alpha}p\T_{\alpha}$, therefore
a linear combination of spanning elements in $\T_{\alpha}$ can be approximated by elements of $\T_{\alpha}p\T_{\alpha}$.
Thus $\overline{\T_{\alpha}p\T_{\alpha}}=\T_{\alpha}$.
\end{proof}

\begin{remark}
When dealing with systems $(A,\Gamma^{+},\alpha)$ in which $\overline{\alpha}_{t}(1)=1$, then
$p=\overline{\pi}_{\alpha}(1)$ is the identity of $\L(\ell^{2}(\Gamma^{+},A))$, and the assertion of
Proposition \ref{piso-ptp} says that $A\times_{\alpha}^{\piso}\Gamma^{+}$ is isomorphic to $\T_{\alpha}$.
\end{remark}

\section{The partial-isometric crossed product of a system by a single endomorphism.}
In this section we consider a system $(A,\N,\alpha)$ of a (non unital) $C^*$-algebra $A$ and an action $\alpha$ of $\N$ by extendible endomorphisms of $A$.
The module $\ell^{2}(\N,A)$ is the vector space of sequences $(x_{n})$ such that the series $\sum_{n\in\N} x_{n}^{*}x_{n}$ converges in the norm of $A$, with the module structure $(x_{n})\cdot a =(x_{n}a)$, and the inner product $\langle(x_{n}),(y_{n})\rangle=\sum_{n\in\N} x_{n}^{*}y_{n}$.

The homomorphism $\pi_{\alpha}:A\rightarrow \L(\ell^{2}(\N,A))$ defined by $\pi_{\alpha}(a)(x_{n})=(\alpha_{n}(a)x_{n})$ is injective, and together with
the non unitary isometry $S\in \L(\ell^{2}(\N,A))$
\[ S(x_{0},x_{1},x_{2},\cdots)=(0,x_{0},x_{1},x_{2},\cdots) \]
satisfy the following equation
\begin{equation}\label{pi-S}
\pi_{\alpha}(a) S_{i} = S_{i}\pi_{\alpha}(\alpha_{i}(a))\quad \text{for all } a\in A, i\in\N.
\end{equation}
Note that $S_{n}\pi_{\alpha}(ab^{*})(1-SS^{*})S_{m}^{*}=\theta_{f,g}$ where
$f(n)=a$ and $f(i)=0$ for $i\neq n$, $g(m)=b$ and $g(i)=0$ for $i\neq m$.
So we can identify  the $C^*$-algebra $\K(\ell^{2}(\N,A))$ as
\[  \overline{\newspan}\{S_{n}\pi_{\alpha}(ab^{*})(1-SS^{*})S_{m}^{*}: n,m\in\N, a,b\in A\}. \]

Let $(A\times_{\alpha}^{\iso}\N,j_{A},T)$ be the isometric crossed product of $(A,\N,\alpha)$, and consider
the natural homomorphism $\phi=(i_{A}\times T) :A\times_{\alpha}^{\piso}\N\rightarrow A\times_{\alpha}^{\iso}\N$.
From the Proposition \ref{surj}, we know that
\begin{equation}\label{kernel-span2}
\ker \phi=\overline{\newspan}\{v_{m}^{*}i_{A}(a)(1-v^{*}v)v_{n}: a\in A, m,n\in\N\}.
\end{equation}

We show in the next theorem that the ideal $\ker \phi$ is a corner in $A\otimes K(\ell^{2}(\N))$.

\begin{theorem}\label{ext}
Suppose $(A,\N,\alpha)$ is a dynamical system in which every $\alpha_{n}:=\alpha^{n}$ extends to a strictly continuous endomorphism
on the multiplier algebra $M(A)$ of $A$.
Let $p=\overline{\pi}_{\alpha}(1_{M(A)}) \in \L(\ell^{2}(\N,A))$.
Then the isomorphism $\psi:A\times_{\alpha}^{\piso}\N\rightarrow p\T_{\alpha}p$ in Proposition \ref{piso-ptp} takes the ideal $\ker \phi$ of $A\times_{\alpha}^{\piso}\N$ given by {\rm (\ref{kernel-span2})} isomorphically to the full corner $p~[K(\ell^{2}(\N,A))]~p$.
So there is a short exact sequence of $C^*$-algebras

\begin{equation}\label{diagram1}
\begin{diagram}\dgARROWLENGTH=0.5\dgARROWLENGTH
\node{0} \arrow{e}
\node{p~[K(\ell^{2}(\N,A))]~p} \arrow{e,t}{\Psi}
\node{A\times_{\alpha}^{\piso}\N} \arrow{e,t}{\phi}
\node{A\times_{\alpha}^{\iso}\N} \arrow{e}
\node{0,}
\end{diagram}
\end{equation}
where $\Psi(pS_{m}\pi_{\alpha}(a)(1-SS^{*})S_{n}^{*}p)=v_{m}^{*}i_{A}(a)(1-v^{*}v)v_{n}$.
\end{theorem}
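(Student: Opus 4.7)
The plan is to track how the isomorphism $\psi$ from Proposition \ref{piso-ptp} transforms the spanning elements of $\ker\phi$ given in Proposition \ref{surj}, verify that the image is precisely the corner $p[K(\ell^2(\N,A))]p$, and finally confirm that this corner is full.

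First I would compute $\psi$ on a typical spanning element $v_m^* i_A(a)(1-v^*v) v_n$ of $\ker\phi$. Since $\psi(i_A(a)) = \pi_\alpha(a)$, $\overline{\psi}(v_n) = w_n = pS_n^* p$, and the induced extension satisfies $\overline{\psi}(1-v^*v) = p-w^*w$, the chain of equalities reproduced in the proof of Lemma \ref{phi-injective} (specialized to $x=1$) gives the simplification $k_A(a)(p-w^*w) = \pi_\alpha(a)(1-SS^*)$. Combining this with the commutations $(1-SS^*)p = p(1-SS^*)$ from Lemma \ref{pi-alpha-bar} and $\pi_\alpha(a)p = \pi_\alpha(a) = p\pi_\alpha(a)$, a short manipulation produces
\[
 \psi(v_m^* i_A(a)(1-v^*v) v_n) \;=\; pS_m\, \pi_\alpha(a)(1-SS^*)\, S_n^*\, p.
\]
Since the identification given immediately before Theorem \ref{ext} shows that $K(\ell^2(\N,A))$ is spanned by $\{S_n \pi_\alpha(ab^*)(1-SS^*) S_m^* : a,b \in A,\; n,m \in \N\}$, density of $A^2$ in $A$ together with continuity of $\pi_\alpha$ implies that $p[K(\ell^2(\N,A))]p$ is spanned by the elements $pS_m \pi_\alpha(a)(1-SS^*) S_n^* p$ with $a \in A$. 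Thus $\psi(\ker\phi) = p[K(\ell^2(\N,A))]p$, and restricting $\psi$ yields an isomorphism $\ker\phi \cong p[K(\ell^2(\N,A))]p$. The map $\Psi$ in the theorem is the inverse of this restriction followed by the inclusion $\ker\phi \hookrightarrow A\times_\alpha^{\piso}\N$, and the displayed formula for $\Psi$ matches.

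Next I would verify that $p[K(\ell^2(\N,A))]p$ is a full corner in $K(\ell^2(\N,A))$, i.e., that $K(\ell^2(\N,A)) \cdot p \cdot K(\ell^2(\N,A))$ is dense in $K(\ell^2(\N,A))$. For a spanning element $S_n \pi_\alpha(ab^*)(1-SS^*) S_m^*$ and an approximate identity $(e_\lambda)$ for $A$, continuity of $\pi_\alpha$ gives
\[
 S_n \pi_\alpha(ab^*)(1-SS^*) S_m^* \;=\; \lim_\lambda S_n \pi_\alpha(ab^* e_\lambda)(1-SS^*) S_m^*,
\]
and using that $(1-SS^*)$ commutes with both $p$ and with every $\pi_\alpha(c)$, each approximating term factors as
\[
 \bigl[S_n \pi_\alpha(ab^*)(1-SS^*)\bigr] \cdot p \cdot \bigl[\pi_\alpha(e_\lambda)(1-SS^*) S_m^*\bigr],
\]
whose two outer factors both lie in $K(\ell^2(\N,A))$ (taking $S_0 = 1$ to match the spanning form). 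Thus every spanning element of $K(\ell^2(\N,A))$ lies in the closure of $K(\ell^2(\N,A)) \cdot p \cdot K(\ell^2(\N,A))$, proving fullness. Exactness of the displayed sequence at $A\times_\alpha^{\piso}\N$ then follows because $\Psi$ identifies $p[K(\ell^2(\N,A))]p$ isomorphically with $\ker\phi$, while surjectivity of $\phi$ was already established preceding \eqref{xact}.

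The most delicate step is the identity in the first paragraph: one must avoid trying to slide $p$ past $S_m$ (they do not commute, as $pS_m = S_m\overline{\pi}_\alpha(\overline{\alpha}_m(1))$ by Lemma \ref{pi-alpha-bar}) and instead rely only on the relations that do hold—namely $\pi_\alpha(a) p = \pi_\alpha(a)$, $p(1-SS^*) = (1-SS^*)p$, and the key simplification $k_A(a)(p-w^*w) = \pi_\alpha(a)(1-SS^*)$ extracted from the proof of Lemma \ref{phi-injective}.
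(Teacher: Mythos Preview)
Your proposal is correct and follows essentially the same route as the paper. The only organizational difference is in the first computation: the paper derives the auxiliary identities $pSpS^{*}=SS^{*}p$ and $pS_{n}^{*}p=S_{n}^{*}p$ directly and then combines them, whereas you recycle the identity $k_{A}(a)(p-w^{*}w)=\pi_{\alpha}(a)(1-SS^{*})$ from the proof of Lemma~\ref{phi-injective} together with the commutation $(1-SS^{*})p=p(1-SS^{*})$; both routes land on the same formula $\psi(v_{m}^{*}i_{A}(a)(1-v^{*}v)v_{n})=pS_{m}\pi_{\alpha}(a)(1-SS^{*})S_{n}^{*}p$, and the fullness argument is identical.
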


\begin{proof}
We compute the image $\psi(\mu)$ of a spanning element $\mu:=v_{m}^{*}i_{A}(a)(1-v^{*}v)v_{n}$ of $\ker\phi$
\[
\psi(\mu)  = pS_{m}p\pi_{\alpha}(a)\psi(1-v^{*}v)pS_{n}^{*}p = pS_{m}\pi_{\alpha}(a)(p-pSpS^{*}p)pS_{n}^{*}p,
\]
\[ pSpS^{*}=(\overline{\pi}_{\alpha}(1)S)\overline{\pi}_{\alpha}(1)S^{*}=S\overline{\pi}_{\alpha}(\overline{\alpha}(1))S^{*}=
S(S\overline{\pi}_{\alpha}(\overline{\alpha}(1))^{*}=S(\overline{\pi}_{\alpha}(1)S)^{*}=SS^{*}p\]
and
\[ pS_{n}^{*}p=\overline{\pi}_{\alpha}(1)(\overline{\pi}_{\alpha}(1)S_{n})^{*}  =
\overline{\pi}_{\alpha}(1)(S_{n}\overline{\pi}_{\alpha}(\overline{\alpha_{n}}(1)))^{*}
=\overline{\pi}_{\alpha}(\overline{\alpha_{n}}(1))S_{n}^{*}=(\overline{\pi}_{\alpha}(1)S_{n})^{*}
 =S_{n}^{*}p. \]
Therefore we have
\begin{equation}\label{psi-span}
 \psi(v_{m}^{*}i_{A}(a)(1-v^{*}v)v_{n}) = p~(S_{m}\pi_{\alpha}(a)(1-SS^{*})S_{n}^{*})~p.
\end{equation}
Since $S_{m}\pi_{\alpha}(a)(1-SS^{*})S_{n}^{*}=\lim_{\lambda}S_{m}\pi_{\alpha}(aa_{\lambda}^{*})(1-SS^{*})S_{n}^{*}$
where $(a_{\lambda})$ is an approximate identity in $A$, and
$S_{m}\pi_{\alpha}(aa_{\lambda}^{*})(1-SS^{*})S_{n}^{*}=\theta_{\xi,\eta_{\lambda}}$ for which $\xi,\eta_{\lambda} \in \ell^{2}(\N,A)$ are given by
$\xi(m)=a$ and $\xi(i)=0$ for $i\neq m$, $\eta_{\lambda}(n)=a_{\lambda}$ and $\eta_{\lambda}(i)=0$ for $i\neq n$,
it follows that $\psi(\mu)\in p~[\K(\ell^{2}(\N,A))]~p$. 
Thus $\psi(\ker \phi)\subset p~[K(\ell^{2}(\N,A))]~p$.

Conversely by similar computations to the way we get the equation (\ref{psi-span}), we have
$pS_{m}\pi_{\alpha}(ab^{*})(1-SS^{*})S_{n}^{*}p= \psi(v_{m}^{*}i_{A}(ab^{*})(1-v^{*}v)v_{n})$.
Hence $p~[K(\ell^{2}(\N,A))]~p\subset \psi(\ker \phi)$.
This corner is full because the algebra $K(\ell^{2}(\N,A)) p K(\ell^{2}(\N,A))$ is dense in $K(\ell^{2}(\N,A))$:
for an approximate identity $(a_{\lambda})$ in $A$, we have
\[ S_{m}\pi_{\alpha}(a)(1-SS^{*})S_{n}^{*} = \lim_{\lambda}S_{m}\pi_{\alpha}(aa_{\lambda})(1-SS^{*})S_{n}^{*} \]
and
$S_{m}\pi_{\alpha}(aa_{\lambda})(1-SS^{*})S_{n}^{*} =(S_{m}\pi_{\alpha}(a)(1-SS^{*})S_{0}^{*}) p (S_{0}\pi_{\alpha}(a_{\lambda})(1-SS^{*})S_{n}^{*}$
is contained in $K(\ell^{2}(\N,A)) p K(\ell^{2}(\N,A))$.
\end{proof}

\begin{remark}
The external tensor product $\ell^{2}(\N)\otimes A$ and $\ell^{2}(\N,A)$ are isomorphic as Hilbert $A$-modules \cite[Lemma 3.43]{RW}, and
the isomorphism is given by
\[ \varphi(f\otimes a)(n)=(f(0)a,f(1)a,f(2)a,\cdots) \text{ for }f\in\ell^{2}(\N) \text{ and } a\in A. \]
The isomorphism $\psi: T\in\L(\ell^{2}(\N,A))~\mapsto~\varphi^{-1} T\varphi \in \L(\ell^{2}(\N)\otimes A)$ satisfies
$\psi(\theta_{\xi,\eta})=\varphi^{-1} ~ \theta_{\xi,\eta} ~\varphi=\theta_{\varphi^{-1}(\xi),\varphi^{-1}(\eta)}$
for all $\xi,\eta \in\ell^{2}(\N,A)$.
Therefore $\psi(\K(\ell^{2}(\N,A)))=\K(\ell^{2}(\N)\otimes A)$.
So $\psi(p)=\varphi^{-1} p\varphi=:\tilde{p}$ is a projection in $\L(\ell^{2}(\N)\otimes A)$.
To see how $\tilde{p}$ acts on $\ell^{2}(\N)\otimes A$, let $f\in \ell^{2}(\N)$, $a\in A$ and $\{e_{n}\}$  the usual orthonormal basis in $\ell^{2}(\N)$.
Then $\tilde{p}(f\otimes a)=\varphi^{-1}(p\varphi(f\otimes a))$, and
\[ p\varphi(f\otimes a)=(f(i)\overline{\alpha}_{i}(1)a)_{i\in\N}=\lim_{k\rightarrow\infty} \varphi(\sum_{i=0}^{k}f(i)e_{i}\otimes \overline{\alpha}_{i}(1)a). \]
Therefore
$\tilde{p}(f\otimes a)=\varphi^{-1}(p\varphi(f\otimes a))=\lim_{k\rightarrow\infty}\sum_{i=0}^{k}f(i)e_{i}\otimes\overline{\alpha}_{i}(1)a$, and
hence $p[\K(\ell^{2}(\N,A))]p\simeq \tilde{p}[\K(\ell^{2}(\N)\otimes A)]\tilde{p}$.
\end{remark}

\begin{example}\label{example-LR}
We now want to compare our results with \cite[\S 6]{LR}.
Consider a system consisting of the $C^*$-algebra ${\bf c}:=\overline{\newspan}\{1_{n} : n\in\N\}$ of convergent sequences, and the action $\tau$ of $\N$ generated by the usual forward shift (non unital endomorphism) on ${\bf c}$.
The ideal ${\bf c_{0}}:=\overline{\newspan}\{1_{x}-1_{y} : x< y\in\N\}$, of sequences in ${\bf c}$ convergent to $0$,
is an extendible $\tau$-invariant in the sense of \cite{Adji1,AH}.
So we can also consider the systems $({\bf c_{0}},\N,\tau)$ and $({\bf c}/{\bf c_{0}},\N,\tilde{\tau})$, where
the action $\tilde{\tau}_{n}$ of the quotient ${\bf c}/{\bf c_{0}}$ is given by $\tilde{\tau}_{n}(1_{x}+{\bf c_{0}})=\tau_{n}(1_{x})+{\bf c_{0}}$.
We show that the three rows of exact sequences in \cite[Theorem 6.1]{LR}, are given by applying our results to $({\bf c},\N,\tau)$, $({\bf c_{0}},\N,\tau)$ and $({\bf c}/{\bf c_{0}},\N,\tilde{\tau})$.

The crossed product ${\bf c} \times_{\tau}^{\piso}\N$ of $({\bf c},\N,\tau)$ is, by \cite[Proposition 5.1]{LR}, the universal algebra generated by a power partial isometry $v$: a covariant partial-isometric representation $(i_{c},v)$ of $({\bf c},\N,\tau)$ is defined by $i_{c}(1_{n})=v_{n}v_{n}^{*}$.
Let $p=\pi_{\tau}(1)$ be the projection in $\T_{{\bf c},\tau}$,  and the partial-isometric representation $w:n\mapsto w_{n}=pS_{n}^{*}p$ of $\N$ in $p\T_{{\bf c},\tau}p$ gives a representation $\pi_{w}$ of ${\bf c}$ where $\pi_{w}(1_{x})=w_{x}w_{x}^{*}$, such that $(\pi_{w},w)$ is a covariant partial-isometric representation of $({\bf c},\N,\tau)$ in $p\T_{{\bf c},\tau}p$.
This $\pi_{w}$ is the homomorphism $k_{\bf c}:{\bf c}\rightarrow p\T_{{\bf c},\tau}p$ defined by Proposition \ref{piso-ptp}, and
the covariant representation $(\pi_{w},w)$ is $(k_{\bf c},w)$.
So $\pi_{w}\times w = k_{\bf c}\times w$ is an isomorphism of ${\bf c} \times_{\tau}^{\piso}\N$ onto the $C^*$-algebra $p\T_{{\bf c},\tau}p$.

Moreover, the injective homomorphism $\Psi:p~[K(\ell^{2}(\N,{\bf c}))]~p \rightarrow ({\bf c}\times_{\tau}^{\piso}\N,i_{{\bf c}},v)$ in Theorem \ref{ext} satisfies
\[ \Psi(pS_{i}\pi_{\tau}(1_{n})(1-SS^{*})S_{j}^{*}p)=v_{i}^{*}i_{{\bf c}}(1_{n})(1-v^{*}v)v_{j}=v_{i}^{*}v_{n}v_{n}^{*}(1-v^{*}v)v_{j}, \]
and the latter is a spanning element $g_{i,j}^{n}$ of $\ker\varphi_{T}$ by \cite[Lemma 6.2]{LR}.
Consequently the ideal $p~[K(\ell^{2}(\N,{\bf c}))]~p$, in our Theorem \ref{ext}, is the $C^*$ algebra $\A=\pi^{*}(\ker \varphi_{T})$ of \cite[Proposition~6.9]{LR}, where the homomorphism $\varphi_{T}:{\bf c} \times_{\tau}^{\piso}\N\rightarrow \T(\Z)$ is induced by the Toeplitz representation $n \mapsto T_{n}$.
Now the Toeplitz (isometric) representation $T:n \mapsto T_{n}$ on $\ell^{2}(\N)$ gives the isomorphism of ${\bf c} \times_{\tau}^{\iso}\N$ onto the Toeplitz algebra $\T(\Z)$, and ${\bf c_{0}} \times_{\tau}^{\iso}\N$ onto the algebra $K(\ell^{2}(\N))$ of compact operators on $\ell^{2}(\N)$.
Then the second row exact sequence in \cite[Theorem 6.1]{LR} follows from the commutative diagram:

\begin{equation}\label{diagram-c}
\begin{diagram}\dgARROWLENGTH=0.5\dgARROWLENGTH
\node{0} \arrow{e} \node{p~[K(\ell^{2}(\N, {\bf c}))]~p} \arrow{s,r}{\Psi}\arrow{e,t}{\Psi}
\node{{\bf c} \times_{\tau}^{\piso}\N} \arrow{s,r}{\id}\arrow{e,t}{\phi}
\node{{\bf c} \times_{\tau}^{\iso}\N} \arrow{s,r}{T}\arrow{e} \node{0.}
\\
\node{0} \arrow{e} \node{\ker(\varphi_{T})\stackrel{\pi^{*}}{\simeq}\A} \arrow{e,t}{(\pi^{*})^{-1}}
\node{{\bf c} \times_{\tau}^{\piso}\N} \arrow{e,t}{\varphi_{T}}
\node{\T(\Z)} \arrow{e} \node{0.}
\end{diagram}
\end{equation}
\end{example}

Next we do similarly for $({\bf c_{0}},\N,\tau)$ and $({\bf c}/{\bf c_{0}},\N,\tilde{\tau})$ to get the first and third row exact sequences of diagram (6.1) in \cite[Theorem 6.1]{LR}.
We know from \cite[Theorem~2.2]{AH} that ${\bf c_{0}}\times_{\tau}^{\piso}\N$ embeds in $({\bf c} \times_{\tau}^{\piso}\N,i_{c},v)$ as the ideal
$D=\overline{\newspan}\{v_{i}^{*}i_{c}(1_{s}-1_{t})v_{j} : s<t, i,j \in \N\}$,
such that the quotient $({\bf c}\times_{\tau}^{\piso}\N)/({\bf c_{0}}\times_{\tau}^{\piso}\N)\simeq {\bf c}/{\bf c_{0}}\times_{\tilde{\tau}}^{\piso}\N$.
Then the isomorphism $\Phi$ in \cite[Corollary 3.1]{AH} together with the isomorphism $\pi$ in \cite[Proposition 6.9]{LR} give the relations
${\bf c_{0}}\times_{\tau}^{\piso}\N\stackrel{\Phi}{\simeq}\ker(\varphi_{T^{*}})\stackrel{\pi}{\simeq} \A$, where the homomorphism $\varphi_{T^{*}}:{\bf c} \times_{\tau}^{\piso}\N\rightarrow \T(\Z)$ is associated to partial-isometric representation $n \mapsto T_{n}^{*}$.

Let $q=\overline{\pi}_{\tau}(1_{M({\bf c_{0}})})$ be the projection in $M(\T_{{\bf c_{0}},\tau})$.
Then we have
\[ q~[K(\ell^{2}(\N,{\bf c_{0}}))]~q =\overline{\newspan}\{q S_{i}\pi_{\tau}(1_{m}-1_{m+1})(1-SS^{*})S_{j}^{*} q: i,j\le m\}, \]
and
\[ \xi_{ijm}:=\Psi(q S_{i}\pi_{\tau}(1_{m}-1_{m+1})(1-SS^{*})S_{j}^{*} q) = g_{i,j}^{m}-g_{i,j}^{m+1}=f^{m}_{m-i,m-j} - f^{m+1}_{m-i,m-j}\]
where
$g_{i,j}^{m}$ and $f_{i,j}^{m}$ are defined in \cite[Lemma 6.2]{LR}.
So $\xi_{ijm}$ is, by \cite[Lemma 6.4]{LR}, the spanning element of the ideal $\I:=\ker(\varphi_{T^{*}})\cap \ker(\varphi_{T})$.
We use the isomorphism $\pi$ given by \cite[Proposition 6.5]{LR} to identify $\I$ with $\A_{0}$, to have the commutative diagram:
\begin{equation}\label{diagram-c0}
\begin{diagram}\dgARROWLENGTH=0.3\dgARROWLENGTH
\node{0} \arrow{e} \node{q~[K(\ell^{2}(\N, {\bf c_{0}}))]~q} \arrow{s,r}{\Psi} \arrow{e,t}{\Psi}
\node{{\bf c_{0}} \times_{\tau}^{\piso}\N} \arrow{s,r}{\Phi}\arrow{e,t}{\phi}
\node{{\bf c_{0}} \times_{\tau}^{\iso}\N} \arrow{s,r}{T} \arrow{e} \node{0}
\\
\node{0} \arrow{e} \node{\I\stackrel{\pi}{\simeq}\A_{0}} \arrow{e,t}{{\rm id}}
\node{\ker(\varphi_{T^{*}})\stackrel{\pi}{\simeq} \A}\arrow{e,t}{\epsilon_{\infty}}
\node{\K(\ell^{2}(\N))}\arrow{e} \node{0}
\end{diagram}
\end{equation}

Finally for the system $({\bf c}/{\bf c_{0}},\N,\tilde{\tau})$, we first note that it is equivariant to $(\C,\N,{\rm id})$.
So in this case, we have $r K(\ell^{2}(N,\C))r=K(\ell^{2}(\N))$, and
$\C\times_{\rm id}^{\piso}\N \stackrel{\rho}{\simeq} \T(\Z)$ where the isomorphism $\rho$ is given by the partial-isometric representation $n\mapsto T_{n}^{*}$, and
identify $(\C\times_{\rm id}^{\iso}\N, j_{\N}) \simeq \C\times_{\rm id}\Z\simeq (C^{*}(\Z),u)$ with the algebra $C(\TT)$ of continuous functions on $\TT$ using
$\delta: j_{\N}(n)\mapsto u_{-n}\in C^{*}(\Z) \mapsto (z \mapsto \overline{z}^{n}) \in C(\TT)$.
Then we get the third row exact sequence of diagram (6.1) of \cite[Theorem 6.1]{LR}:
\begin{equation}\label{diagram-CC}
\begin{diagram}\dgARROWLENGTH=0.3\dgARROWLENGTH
\node{0} \arrow{e} \node{K(\ell^{2}(\N))} \arrow{se}\arrow{e,t}{\Psi}
\node{\C \times_{\id}^{\piso}\N} \arrow{s,r}{\rho}\arrow{e,t}{\phi}
\node{\C \times_{\id}^{\iso}\N} \arrow{s,r}{\delta}\arrow{e} \node{0}
\\
\node{ } \node{ } \node{\T(\Z)} \arrow{e,t}{\psi_{T}}
\node{C(\TT)} \arrow{ne} \node{}
\end{diagram}
\end{equation}

\begin{remark}
We have seen in Example \ref{example-LR} the three rows exact sequences of \cite[Diagram 6.1]{LR} are computed from our results.
The three columns exact sequences can actually be obtained by \cite[Theorem 2.2,Corollary 3.1]{AH}.
Although these do not imply the commutativity of all rows and columns (because we have not obtained the analogous theorem of \cite[Theorem 2.2]{AH} for the algebra $\T_{(A,\N,\alpha)}$), nevertheless it follows from our results that
the algebras $\A$ and $\A_{0}$ appeared in \cite[Diagram 6.1]{LR}  are Morita equivalent to ${\bf c}\otimes K(\ell^{2}(\N))$ and ${\bf c_{0}}\otimes K(\ell^{2}(\N))$ respectively.
It is a helpful fact in particular for describing the primitive ideal space of ${\bf c}\times_{\tau}^{\piso}\N$.
\end{remark}

\begin{example}
If $(A,\N,\alpha)$ is a system of a $C^{*}$-algebra for which $\overline{\alpha}(1)=1$, then (\ref{diagram1}) is the exact sequence of \cite[Theorem 1.5]{KS}.
This is because  $p=\overline{\pi}_{\alpha}(1)$ is the identity of $\T_{(A,\N,\alpha)}$, so $A\times_{\alpha}^{\piso}\N$ is isomorphic to $\T_{(A,\N,\alpha)}$ and
$p[\K(\ell^{2}(\N,A))]p=\K(\ell^{2}(\N,A))$.
Let $(A_{\infty},\beta^{n})_{n}$ be the limit of direct sequence $(A_{n})$ where $A_{n}=A$ for every $n$ and $\alpha_{m-n}:A_{n}\rightarrow A_{m}$ for $n\le m$.
All the bonding maps $\beta^{i}:A_{i}\rightarrow A_{\infty}$  extend trivially to the multiplier algebras and preserve the identity.
Therefore we have $(A\times^{\iso}_{\alpha}\N,j_{A},j_{\N})\simeq (A_{\infty}\times_{\alpha_{\infty}}\Z,i_{\infty},u)$ in which the isomorphism is given by
$\iota(j_{\N}(n)^{*}j_{A}(a)j_{\N}(m)=u_{n}^{*}i_{\infty}(\beta^{0}(a))u_{m}$, and then the commutative diagram follows:
\begin{equation}\label{diagram-KS}
\begin{diagram}\dgARROWLENGTH=0.3\dgARROWLENGTH
\node{0} \arrow{e} \node{p~[K(\ell^{2}(\N, A))]~p} \arrow{s,r}{\id} \arrow{e,t}{\Psi}
\node{A\times_{\alpha}^{\piso}\N} \arrow{s,r}{\psi}\arrow{e,t}{\phi}
\node{A \times_{\alpha}^{\iso}\N} \arrow{s,r}{\iota} \arrow{e} \node{0}
\\
\node{0} \arrow{e} \node{K(\ell^{2}(\N, A))} \arrow{e,t}{{\rm id}}
\node{\T_{(A,\N,\alpha)}}\arrow{e,t}{q}
\node{A_{\alpha_{\infty}}\times_{\alpha_{\infty}}\Z}\arrow{e} \node{0.}
\end{diagram}
\end{equation}
\end{example}

\section{The partial-isometric crossed product of a system by a semigroup of automorphisms.}
Suppose $(A,\Gamma^{+},\alpha)$ is a system of an action $\alpha:\Gamma^{+} \rightarrow \Aut{A}$
by automorphisms on $A$,  and consider the distinguished system $(B_{\Gamma^{+}},\Gamma^{+},\tau)$ of the commutative $C^*$-algebra $B_{\Gamma^{+}}$
by semigroup of endomorphisms $\tau_{x}\in\End(B_{\Gamma^{+}})$.
Then $x\mapsto \tau_{x}\otimes \alpha_{x}^{-1}$ defines an action $\gamma$ of $\Gamma^{+}$ by endomorphisms of $B_{\Gamma^{+}}\otimes A$.
So we have a system $(B_{\Gamma^{+}}\otimes A,\Gamma^{+},\gamma)$ by a semigroup of endomorphisms.
We prove in the proposition below that the isometric-crossed product $(B_{\Gamma^{+}}\otimes A) \times_{\gamma}^{\iso} \Gamma^{+}$
is $A\times_{\alpha}^{\piso}\Gamma^{+}$.

\begin{prop}\label{piso-iso}
Suppose $\alpha:\Gamma^{+} \rightarrow \Aut{A}$ is an action by automorphisms on a $C^*$-algebra $A$
of the positive cone $\Gamma^{+}$ of a totally ordered abelian group $\Gamma$.
Then the partial-isometric crossed product $A\times_{\alpha}^{\piso}\Gamma^{+}$ is isomorphic to the isometric crossed product
$((B_{\Gamma^{+}}\otimes A) \times_{\gamma}^{\iso} \Gamma^{+},j)$.
More precisely, the $C^{*}$-algebra $(B_{\Gamma^{+}}\otimes A) \times_{\gamma}^{\iso} \Gamma^{+}$ together with a pair of homomorphisms
$(k_{A},k_{\Gamma^{+}}):(A,\Gamma^{+},\alpha)\rightarrow M((B_{\Gamma^{+}}\otimes A) \times_{\gamma}^{\iso} \Gamma^{+})$ defined by
$k_{A}(a)=j_{B_{\Gamma^{+}}\otimes A}(1\otimes a)$ and
$k_{\Gamma^{+}}(x)=j_{\Gamma^{+}}(x)^{*}$ is a partial-isometric crossed product for $(A,\Gamma^{+},\alpha)$.
\end{prop}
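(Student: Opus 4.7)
The plan is to verify that the triple $\big((B_{\Gamma^{+}}\otimes A)\times_{\gamma}^{\iso}\Gamma^{+},\,k_{A},\,k_{\Gamma^{+}}\big)$ satisfies the three defining properties (i)--(iii) of a partial-isometric crossed product for $(A,\Gamma^{+},\alpha)$ in Definition \ref{def}; the uniqueness assertion following that definition will then deliver the stated isomorphism with $A\times_{\alpha}^{\piso}\Gamma^{+}$. Throughout I abbreviate $j:=j_{B_{\Gamma^{+}}\otimes A}$. Two consequences of $\alpha$ being automorphic will be used repeatedly: $\overline{\alpha}_{s}(1_{M(A)})=1_{M(A)}$, so that $\overline{\gamma}_{s}(1)=1_{s}\otimes 1$ and $j_{\Gamma^{+}}(s)j_{\Gamma^{+}}(s)^{*}=\overline{j}(1_{s}\otimes 1)$; and $\alpha_{s}$ is invertible on $A$.

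For the covariance property (i), I invoke Lemma 4.3 of \cite{LR}: it suffices to check $k_{A}(\alpha_{s}(a))k_{\Gamma^{+}}(s)=k_{\Gamma^{+}}(s)k_{A}(a)$ and $k_{\Gamma^{+}}(s)k_{\Gamma^{+}}(s)^{*}=\overline{k}_{A}(\overline{\alpha}_{s}(1))$. The second reduces to $j_{\Gamma^{+}}(s)^{*}j_{\Gamma^{+}}(s)=1$, which holds since $j_{\Gamma^{+}}$ is isometric. The first is the adjoint of the key identity
\[
 j(1\otimes a)\,j_{\Gamma^{+}}(s)=j_{\Gamma^{+}}(s)\,j(1\otimes\alpha_{s}(a)),
\]
which I derive by decomposing $j(1\otimes a)=j(1_{s}\otimes a)+j((1-1_{s})\otimes a)$. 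By isometric covariance, $j(1_{s}\otimes a)=j_{\Gamma^{+}}(s)j(1\otimes\alpha_{s}(a))j_{\Gamma^{+}}(s)^{*}$, so right-multiplication by $j_{\Gamma^{+}}(s)$ together with $j_{\Gamma^{+}}(s)^{*}j_{\Gamma^{+}}(s)=1$ yields $j_{\Gamma^{+}}(s)j(1\otimes\alpha_{s}(a))$; the second summand is killed when right-multiplied by $j_{\Gamma^{+}}(s)$, because the complementary projection $1-j_{\Gamma^{+}}(s)j_{\Gamma^{+}}(s)^{*}=\overline{j}((1-1_{s})\otimes 1)$ commutes with $j(1\otimes a)$ and annihilates $j_{\Gamma^{+}}(s)$.

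For the universal property (ii), given a covariant partial-isometric pair $(\pi,V)$ of $(A,\Gamma^{+},\alpha)$ on $H$, I dilate it to an isometric covariant pair $(\tilde\pi,W)$ of $(B_{\Gamma^{+}}\otimes A,\Gamma^{+},\gamma)$ on the same $H$. Set $W_{s}:=V_{s}^{*}$; this is an isometry because $V_{s}V_{s}^{*}=\overline{\pi}(\overline{\alpha}_{s}(1))=1$, and commutativity of $\Gamma^{+}$ yields $W_{s}W_{t}=W_{s+t}$. Define $\tilde\pi_{0}\colon B_{\Gamma^{+}}\to B(H)$ by $\tilde\pi_{0}(1_{s}):=V_{s}^{*}V_{s}$; by Proposition 3.2 of \cite{LR} these projections commute, and the computation in Lemma \ref{p-jstar} applied to $V$ shows $V_{s}^{*}V_{s}\cdot V_{t}^{*}V_{t}=V_{\max(s,t)}^{*}V_{\max(s,t)}$, so $\tilde\pi_{0}$ extends to a unital $\ast$-homomorphism. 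The second partial-isometric covariance relation makes $\tilde\pi(f\otimes a):=\tilde\pi_{0}(f)\pi(a)$ a well-defined non-degenerate $\ast$-homomorphism, and a short computation using $\pi(c)V_{u}=V_{u}\pi(\alpha_{u}^{-1}(c))$ (from $\pi(\alpha_{u}(a))V_{u}=V_{u}\pi(a)$ and invertibility of $\alpha_{u}$) verifies $\tilde\pi(\gamma_{s}(b))=W_{s}\tilde\pi(b)W_{s}^{*}$. The universal property of the isometric crossed product then produces $\tilde\pi\times W$, which satisfies $(\tilde\pi\times W)\circ k_{A}=\pi$ and $\overline{(\tilde\pi\times W)}\circ k_{\Gamma^{+}}=V$.

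The step I expect to be the main obstacle is the spanning property (iii): $(B_{\Gamma^{+}}\otimes A)\times_{\gamma}^{\iso}\Gamma^{+}=\overline{\operatorname{span}}\{k_{\Gamma^{+}}(x)^{*}k_{A}(a)k_{\Gamma^{+}}(y)\}=\overline{\operatorname{span}}\{j_{\Gamma^{+}}(x)j(1\otimes a)j_{\Gamma^{+}}(y)^{*}\}$. Starting from the usual spanning set $\{j_{\Gamma^{+}}(x)^{*}j(1_{r}\otimes a)j_{\Gamma^{+}}(y)\}$ for an isometric crossed product by an abelian ordered semigroup, I substitute $j(1_{r}\otimes a)=j_{\Gamma^{+}}(r)j(1\otimes\alpha_{r}(a))j_{\Gamma^{+}}(r)^{*}$ and reduce the outer products $j_{\Gamma^{+}}(x)^{*}j_{\Gamma^{+}}(r)$ and $j_{\Gamma^{+}}(r)^{*}j_{\Gamma^{+}}(y)$ via $j_{\Gamma^{+}}(u)^{*}j_{\Gamma^{+}}(u)=1$ and the semigroup law, each collapsing into a single $j_{\Gamma^{+}}$ or $j_{\Gamma^{+}}^{*}$. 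This produces four sign cases; in each I apply the identity from (i) and its adjoint $j_{\Gamma^{+}}(u)^{*}j(1\otimes c)=j(1\otimes\alpha_{u}(c))j_{\Gamma^{+}}(u)^{*}$ to migrate any mis-sided $j_{\Gamma^{+}}^{(*)}$ across the inner $j(1\otimes\alpha_{r}(a))$ to its proper outer position, finally arriving at an element of the form $j_{\Gamma^{+}}(z)j(1\otimes c)j_{\Gamma^{+}}(w)^{*}$ (with $z$ or $w$ possibly zero). The most intricate bookkeeping occurs when $x>r>y$, so that both remnants are starred on the ``wrong'' sides of the inner $j$, but each case nevertheless dissolves into a finite computation.
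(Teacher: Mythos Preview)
Your proof is correct and follows the same three–step verification of Definition~\ref{def} as the paper. The treatment of (ii) is essentially identical (the paper also sets $W_{s}=V_{s}^{*}$, $\tilde\pi_{0}(1_{s})=V_{s}^{*}V_{s}$). The only noteworthy differences are cosmetic: for (i) the paper computes $k_{A}(\alpha_{x}(a))=k_{\Gamma^{+}}(x)k_{A}(a)k_{\Gamma^{+}}(x)^{*}$ directly from isometric covariance rather than via your decomposition $1=1_{s}+(1-1_{s})$, and for (iii) the paper records the single identity
\[
k_{\Gamma^{+}}(x)^{*}k_{A}(a)k_{\Gamma^{+}}(y)=j_{\Gamma^{+}}(y)^{*}\,j\bigl(1_{x+y}\otimes\alpha_{x+y}^{-1}(a)\bigr)\,j_{\Gamma^{+}}(x)
\]
and declares density, leaving implicit the reduction that in a general spanning element $j_{\Gamma^{+}}(v)^{*}j(1_{r}\otimes c)j_{\Gamma^{+}}(w)$ one may absorb the range projections $j_{\Gamma^{+}}(v)j_{\Gamma^{+}}(v)^{*}=\overline{j}(1_{v}\otimes 1)$ and $\overline{j}(1_{w}\otimes 1)$ into $1_{r}$ so as to assume $r\ge\max(v,w)$; that is precisely the ``good'' case $x\le r$, $y\le r$ of your four-case analysis. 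Your case analysis is more laborious but also more self-contained, and in fact makes the spanning argument more transparent than the paper's one-liner.
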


\begin{proof}
Every $k_{\Gamma^{+}}(x)$ satisfies
$k_{\Gamma^{+}}(x)k_{\Gamma^{+}}(x)^{*}=j_{\Gamma^{+}}(x)^{*}j_{\Gamma^{+}}(x)=1$, and
$(k_{A},k_{\Gamma^{+}})$ is a partial-isometric covariant representation for $(A,\Gamma^{+},\alpha)$:
\begin{align*}
j_{B_{\Gamma^{+}}\otimes A}(1\otimes \alpha_{x}(a))
& = j_{\Gamma^{+}}(x)^{*}j_{\Gamma^{+}}(x)j_{B_{\Gamma^{+}}\otimes A}(1\otimes \alpha_{x}(a))j_{\Gamma^{+}}(x)^{*}j_{\Gamma^{+}}(x) \\
& = j_{\Gamma^{+}}(x)^{*}j_{B_{\Gamma^{+}}\otimes A}(\tau_{x}\otimes \alpha_{x}^{-1}(1\otimes \alpha_{x}(a)))j_{\Gamma^{+}}(x)\\
& = j_{\Gamma^{+}}(x)^{*}j_{B_{\Gamma^{+}}\otimes A}(1_{x}\otimes a)j_{\Gamma^{+}}(x)
= j_{\Gamma^{+}}(x)^{*}j_{B_{\Gamma^{+}}}(1_{x})j_{A}(a)j_{\Gamma^{+}}(x)\\
& = j_{\Gamma^{+}}(x)^{*}j_{\Gamma^{+}}(x)j_{\Gamma^{+}}(x)^{*}j_{A}(a)j_{\Gamma^{+}}(x)
= j_{\Gamma^{+}}(x)^{*}j_{B_{\Gamma^{+}}\otimes A}(1\otimes a)j_{\Gamma^{+}}(x),
\end{align*}
and
$j_{\Gamma^{+}}(x)j_{\Gamma^{+}}(x)^{*}j_{B_{\Gamma^{+}}\otimes A}(1\otimes a)=
j_{B_{\Gamma^{+}}\otimes A}(1\otimes a)j_{\Gamma^{+}}(x)j_{\Gamma^{+}}(x)^{*}$ because
$j_{B_{\Gamma^{+}}\otimes A}(1_{x}\otimes a)=j_{A}(a)j_{B_{\Gamma^{+}}}(1_{x})$

Suppose $(\pi,V)$ is a partial-isometric covariant representation of $(A,\Gamma^{+},\alpha)$ on $H$.
We want to get a non degenerate representation $\pi\times V$ of the isometric crossed product
$(B_{\Gamma^{+}}\otimes A) \times_{\gamma}^{\iso} \Gamma^{+}$ which satisfies
$(\pi\times V)\circ k_{A}(a)=\pi(a)$ and $(\overline{\pi\times V})\circ k_{\Gamma^{+}}(x)=V_{x}$ for all $a\in A$ and $x\in\Gamma^{+}$.

Since $V_{x}V_{x}^{*}=1$ for all $x\in\Gamma^{+}$,
$x\mapsto V_{x}^{*}$ is an isometric representation of $\Gamma^{+}$, and therefore
$\pi_{V^{*}}(1_{x})=V_{x}^{*}V_{x}$ defines a representation $\pi_{V^{*}}$ of $B_{\Gamma^{+}}$
such that $(\pi_{V^{*}},V^{*})$ is an isometric covariant representation of $(B_{\Gamma^{+}},\Gamma^{+},\tau)$.
Moreover $\pi_{V^{*}}$ commutes with $\pi$ because
$\pi_{V^{*}}(1_{x})\pi(a)=V_{x}^{*}V_{x}\pi(a)=\pi(a)V_{x}^{*}V_{x}=\pi(a)\pi_{V^{*}}(1_{x})$.
Thus $\pi_{V^{*}}\otimes \pi$ is a non degenerate representation of $B_{\Gamma^{+}}\otimes A$ on $H$, and
$\pi_{V^{*}}\otimes \pi(1_{y}\otimes a)=\pi_{V^{*}}(1_{y})\pi(a)=\pi(a)\pi_{V^{*}}(1_{y})$.
We clarify that $(\pi_{V^{*}}\otimes \pi, V^{*})$ is in fact an isometric covariant representation of the system
$(B_{\Gamma^{+}}\otimes A,\Gamma^{+},\gamma)$:
\begin{align*}
\pi_{V^{*}}\otimes \pi(\tau_{x}\otimes \alpha_{x}^{-1}(1_{y}\otimes a)) & = \pi_{V^{*}}(\tau_{x}(1_{y}))\pi(\alpha_{x}^{-1}(a))
=V_{x}^{*}\pi_{V^{*}}(1_{y})V_{x}\pi(\alpha_{x}^{-1}(a)) \\
& =V_{x}^{*}\pi_{V^{*}}(1_{y})\pi(\alpha_{x}(\alpha_{x}^{-1}(a)))V_{x} \mbox{ by piso covariance of } (\pi,V) \\
& =V_{x}^{*}\pi_{V^{*}}(1_{y})\pi(a)V_{x} =V_{x}^{*}(\pi_{V^{*}}\otimes \pi)(1_{y}\otimes a)V_{x}.
\end{align*}
Then $\rho:=(\pi_{V^{*}}\otimes \pi)\times V^{*}$ is the non degenerate representation
of $(B_{\Gamma^{+}}\otimes A) \times_{\gamma}^{\iso}\Gamma^{+}$
which satisfies the requirements
\[ \rho(k_{A}(a))=\rho(j_{B_{\Gamma^{+}}\otimes A }(1\otimes a))=\pi_{V^{*}}\otimes\pi(1\otimes a)=\pi(a) \]
and $\overline{\rho}(k_{\Gamma^{+}}(x))=\overline{\rho}(j_{\Gamma^{+}}(x)^{*})=V_{x}$.
Finally, the span of $\{k_{\Gamma^{+}}(x)^{*}k_{A}(a)k_{\Gamma^{+}}(y)\}$ is dense in
$(B_{\Gamma^{+}}\otimes A) \times_{\gamma}^{\iso}\Gamma^{+}$ because
\[ k_{\Gamma^{+}}(x)^{*}k_{A}(a)k_{\Gamma^{+}}(y)=j_{\Gamma^{+}}(y)^{*}j_{B_{\Gamma^{+}}\otimes A }(1_{x+y}\otimes \alpha_{x+y}^{-1}(a))j_{\Gamma^{+}}(x). \]
\end{proof}

Proposition \ref{piso-iso} gives an isomorphism
$k: (A\times_{\alpha}^{\piso}\Gamma^{+},i)\rightarrow ((B_{\Gamma^{+}}\otimes A) \times_{\gamma}^{\iso} \Gamma^{+},j)$
which satisfies
$k(i_{\Gamma^{+}}(x))=j_{\Gamma^{+}}(x)^{*}$ and $k(i_{A}(a))=j_{B_{\Gamma^{+}}\otimes A}(1\otimes a)$.
This isomorphism maps the ideal $\ker\phi$ of $A\times_{\alpha}^{\piso}\Gamma^{+}$ in Proposition \ref{kernel-piso-iso} isomorphically onto
the ideal
\[ {\mathcal I} := \overline{\newspan}\{j_{B_{\Gamma^{+}\otimes A}}(1\otimes a) j_{\Gamma^{+}}(x) [1-j_{\Gamma^{+}}(t)j_{\Gamma^{+}}(t)^{*}] j_{\Gamma^{+}}(y)^{*} : a\in A, x,y,t \in\Gamma^{+}\} \]
of $(B_{\Gamma^{+}}\otimes A) \times_{\gamma}^{\iso} \Gamma^{+}$.
We identify this ideal in Lemma \ref{ker-iso}.
First we need to recall from \cite{Adji1} the notion of extendible ideals, it was shown there that
\[ B_{\Gamma^{+},\infty}:=\overline{\newspan}\{1_{x}-1_{y} : x<y \in\Gamma^{+}\} \]
is an extendible $\tau$-invariant ideal of $B_{\Gamma^{+}}$.
Thus $B_{\Gamma^{+},\infty}\otimes A$ is an extendible $\gamma$-invariant ideal of $B_{\Gamma^{+}}\otimes A$.
We can therefore consider the system $(B_{\Gamma^{+},\infty}\otimes A),\Gamma^{+},\gamma)$.
Extendibility of ideal is required to assure the crossed product
$(B_{\Gamma^{+},\infty}\otimes A)\times_{\gamma}^{\iso}\Gamma^{+}$ embeds naturally as an ideal of $(B_{\Gamma^{+}}\otimes A)\times_{\gamma}^{\iso}\Gamma^{+}$ such that the quotient is the crossed product of the quotient algebra $B_{\Gamma^{+}}\otimes A /B_{\Gamma^{+},\infty}\otimes A$ \cite[Theorem 3.1]{Adji1}.

\begin{lemma} \label{ker-iso}
The ideal ${\mathcal I}$ is $(B_{\Gamma^{+},\infty}\otimes A) \times_{\gamma}^{\iso}\Gamma^{+}$.
\end{lemma}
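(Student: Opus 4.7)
The plan is to rewrite the spanning elements of $\mathcal{I}$ inside the isometric crossed product in a form that manifests membership in $(B_{\Gamma^{+},\infty}\otimes A)\times_{\gamma}^{\iso}\Gamma^{+}$. The key observation is that, because $(j_{B_{\Gamma^{+}}\otimes A},j_{\Gamma^{+}})$ is an \emph{isometric} covariant pair of $(B_{\Gamma^{+}}\otimes A,\Gamma^{+},\gamma)$, one has $j_{\Gamma^{+}}(t)j_{\Gamma^{+}}(t)^{*}=\overline{j}_{B_{\Gamma^{+}}\otimes A}(\overline{\gamma}_{t}(1\otimes 1))$; since $\alpha$ acts by automorphisms we have $\overline{\alpha}_{t}^{-1}(1)=1$, and $\overline{\tau}_{t}(1)=1_{t}$, so $\overline{\gamma}_{t}(1\otimes 1)=1_{t}\otimes 1$. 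Consequently
\[
1-j_{\Gamma^{+}}(t)j_{\Gamma^{+}}(t)^{*}=\overline{j}_{B_{\Gamma^{+}}\otimes A}((1-1_{t})\otimes 1).
\]
Applying the isometric covariance identity $j_{\Gamma^{+}}(x)\,\overline{j}(m)=\overline{j}(\overline{\gamma}_{x}(m))\,j_{\Gamma^{+}}(x)$ for $m\in M(B_{\Gamma^{+}}\otimes A)$, valid by extendibility of $\gamma$, together with $\overline{\gamma}_{x}((1-1_{t})\otimes 1)=(1_{x}-1_{x+t})\otimes 1$, a typical spanning element of $\mathcal{I}$ rewrites as
\[
j(1\otimes a)\,j_{\Gamma^{+}}(x)[1-j_{\Gamma^{+}}(t)j_{\Gamma^{+}}(t)^{*}]\,j_{\Gamma^{+}}(y)^{*}=j((1_{x}-1_{x+t})\otimes a)\,j_{\Gamma^{+}}(x)j_{\Gamma^{+}}(y)^{*}.
\]

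The inclusion $\mathcal{I}\subseteq (B_{\Gamma^{+},\infty}\otimes A)\times_{\gamma}^{\iso}\Gamma^{+}$ then follows: since $(1_{x}-1_{x+t})\otimes a\in B_{\Gamma^{+},\infty}\otimes A$, the factor $j((1_{x}-1_{x+t})\otimes a)$ lies in the canonical image of $(B_{\Gamma^{+},\infty}\otimes A)\times_{\gamma}^{\iso}\Gamma^{+}$, which is a closed two-sided ideal of $(B_{\Gamma^{+}}\otimes A)\times_{\gamma}^{\iso}\Gamma^{+}$ by \cite[Theorem 3.1]{Adji1}, and multiplying by the multiplier $j_{\Gamma^{+}}(x)j_{\Gamma^{+}}(y)^{*}$ preserves this ideal. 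For the reverse inclusion I would again invoke \cite[Theorem 3.1]{Adji1}, which says this ideal is generated (as a closed two-sided ideal) by $j(B_{\Gamma^{+},\infty}\otimes A)$; since $B_{\Gamma^{+},\infty}=\overline{\newspan}\{1_{x}-1_{x+t}:x\ge 0,\,t>0\}$, it suffices to place each $j((1_{x}-1_{x+t})\otimes a)$ inside $\mathcal{I}$. Specializing the identity above to $y=x$ and using both $j_{\Gamma^{+}}(x)j_{\Gamma^{+}}(x)^{*}=\overline{j}(1_{x}\otimes 1)$ and the pointwise identity $(1_{x}-1_{x+t})\cdot 1_{x}=1_{x}-1_{x+t}$ in $B_{\Gamma^{+}}$, the right-hand side collapses to $j((1_{x}-1_{x+t})\otimes a)$, exhibiting it as a spanning element of $\mathcal{I}$.

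The only slightly delicate point is shepherding the multiplier-algebra manipulations --- strictly continuous extension of $j$ to $M(B_{\Gamma^{+}}\otimes A)$ and the identity $V_{t}V_{t}^{*}=\overline{\pi}(\overline{\gamma}_{t}(1))$ in the isometric setting --- but these are routine consequences of the extendibility of $\gamma$ and of the hypothesis that $\alpha$ acts by automorphisms, without which $\overline{\alpha}_{t}^{-1}(1)=1$ would fail and the clean formula $\overline{\gamma}_{t}(1\otimes 1)=1_{t}\otimes 1$ would break down.
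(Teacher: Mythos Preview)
Your proof is correct and takes a cleaner route than the paper's. Both proofs hinge on the same key identity $j_{\Gamma^{+}}(t)j_{\Gamma^{+}}(t)^{*}=\overline{j}_{B_{\Gamma^{+}}\otimes A}(1_{t}\otimes 1)$, but you exploit it systematically at the outset to obtain the single formula
\[
j(1\otimes a)\,j_{\Gamma^{+}}(x)\bigl[1-j_{\Gamma^{+}}(t)j_{\Gamma^{+}}(t)^{*}\bigr]j_{\Gamma^{+}}(y)^{*}
=j((1_{x}-1_{x+t})\otimes a)\,j_{\Gamma^{+}}(x)j_{\Gamma^{+}}(y)^{*},
\]
and then finish both inclusions by invoking ideal structure: for $\mathcal I\subseteq(B_{\Gamma^{+},\infty}\otimes A)\times_{\gamma}^{\iso}\Gamma^{+}$ you use that the right-hand side is an element of the ideal times a multiplier; for the reverse inclusion you specialize to $y=x$, recover the generators $j((1_{x}-1_{x+t})\otimes a)$ of the target ideal, and conclude because $\mathcal I$ is itself a closed ideal (it is $k(\ker\phi)$). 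The paper instead proceeds by explicit case analysis, splitting on $x<y$ versus $x\ge y$ in the forward direction and on the position of $x$ relative to $s<t$ in the reverse direction, to rewrite each spanning element directly as a spanning element of the other set. Your approach avoids all of this case-splitting at the cost of appealing to the ideal property of $\mathcal I$ (which the paper has already established); the paper's approach is more self-contained at the level of spanning elements but correspondingly more laborious.
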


\begin{proof}
We know from \cite[Theorem 3.1]{Adji1} that the ideal $(B_{\Gamma^{+},\infty}\otimes A)\times_{\gamma}^{\iso}\Gamma^{+}$ is spanned by
\[ \{j_{\Gamma^{+}}(v)^{*} j_{B_{\Gamma^{+}\otimes A}}((1_{s}-1_{t})\otimes a)j_{\Gamma^{+}}(w):  s< t, v,w \text{ in } \Gamma^{+}, a\in A\}.\]
So to prove the Lemma, it is enough to show that ${\mathcal I}$ and $(B_{\Gamma^{+},\infty}\otimes A)\times_{\gamma}^{\iso}\Gamma^{+}$ contain each other.

We compute on their generator elements in next paragraph using the fact that the covariant representation $(j_{B_{\Gamma^{+}}\otimes A},j_{\Gamma^{+}})$ gives a unital homomorphism $j_{B_{\Gamma^{+}}}$ which commutes with the non degenerate homomorphism $j_{A}$, and that the pair
$(j_{B_{\Gamma^{+}}},j_{\Gamma^{+}})$ is a covariant representation of $(B_{\Gamma^{+}},\Gamma^{+},\tau)$.
Each isometry $j_{\Gamma^{+}}(x)$ is not a unitary, so the pair $(j_{A},j_{\Gamma^{+}})$ fails to be a covariant representation of $(A,\Gamma^{+},\alpha^{-1})$.
However it satisfies the equation $j_{A}(\alpha^{-1}_{x}(a))j_{\Gamma^{+}}(x)=j_{\Gamma^{+}}(x)j_{A}(a)$ for all $a\in A$ and $x\in\Gamma^{+}$.

Let $\xi$ be a spanning element of ${\mathcal I}$.
If $x<y$ and $t$ are in $\Gamma^{+}$, then $j_{\Gamma^{+}}(y)^{*}=j_{\Gamma^{+}}(x)^{*}j_{\Gamma^{+}}(y-x)^{*}$, and
\begin{align*}
j_{\Gamma^{+}}(x)[1-j_{\Gamma^{+}}(t)j_{\Gamma^{+}}(t)^{*}] j_{\Gamma^{+}}(y)^{*} & =
(j_{\Gamma^{+}}(x)j_{\Gamma^{+}}(x)^{*}-j_{\Gamma^{+}}(x+t)j_{\Gamma^{+}}(x+t)^{*})j_{\Gamma^{+}}(y-x)^{*}\\
& = \overline{j}_{B_{\Gamma^{+}}\otimes A}((1_{x}-1_{x+t})\otimes 1_{M(A)}) j_{\Gamma^{+}}(y-x)^{*},
\end{align*}
so
\begin{align*}
\xi=j_{B_{\Gamma^{+}}\otimes A}((1_{x}-1_{x+t})\otimes a)j_{\Gamma^{+}}(y-x)^{*}
& = j_{\Gamma^{+}}(y-x)^{*} j_{B_{\Gamma^{+}}\otimes A}(\gamma_{y-x}((1_{x}-1_{x+t})\otimes a)) \\
& = j_{\Gamma^{+}}(y-x)^{*} j_{B_{\Gamma^{+}}\otimes A}((1_{y}-1_{y+t})\otimes \alpha_{y-x}^{-1}(a)).
\end{align*}
If $x\ge y$, then $j_{\Gamma^{+}}(x) =j_{\Gamma^{+}}(x-y) j_{\Gamma^{+}}(y)$, and
\begin{align*}
j_{\Gamma^{+}}(x) & [1-j_{\Gamma^{+}}(t)j_{\Gamma^{+}}(t)^{*}] j_{\Gamma^{+}}(y)^{*} =
j_{\Gamma^{+}}(x-y)[j_{\Gamma^{+}}(y)j_{\Gamma^{+}}(y)^{*}-j_{\Gamma^{+}}(y+t)j_{\Gamma^{+}}(y+t)^{*}] \\
& = j_{\Gamma^{+}}(x-y)\overline{j}_{B_{\Gamma^{+}}\otimes A}((1_{y}-1_{y+t})\otimes 1_{M(A)})j_{\Gamma^{+}}(x-y)^{*}j_{\Gamma^{+}}(x-y) \\
& = \overline{j}_{B_{\Gamma^{+}}\otimes A}((1_{x}-1_{x+t})\otimes 1_{M(A)})j_{\Gamma^{+}}(x-y),
\end{align*}
so  $\xi=j_{B_{\Gamma^{+}}\otimes A}((1_{x}-1_{x+t})\otimes a)j_{\Gamma^{+}}(x-y)$, and therefore
${\mathcal I}\subset (B_{\Gamma^{+},\infty}\otimes A)\times_{\gamma}^{\iso}\Gamma^{+}$.

For the other inclusion, let $\eta=j_{B_{\Gamma^{+}}\otimes A} ((1_{s}-1_{t})\otimes a)j_{\Gamma^{+}}(x)$ be a generator of
$(B_{\Gamma^{+},\infty}\otimes A)\times_{\gamma}^{\iso}\Gamma^{+}$.
Then $\eta=j_{A}(a) [j_{\Gamma^{+}}(s)j_{\Gamma^{+}}(s)^{*}-j_{\Gamma^{+}}(t)j_{\Gamma^{+}}(t)^{*}]j_{\Gamma^{+}}(x)$,
and a similar computation shows that
\begin{align*} & [j_{\Gamma^{+}}(s)j_{\Gamma^{+}}(s)^{*} - j_{\Gamma^{+}}(t)j_{\Gamma^{+}}(t)^{*}]  j_{\Gamma^{+}}(x) \\
&=\left\{ \begin{array}{ll}
j_{\Gamma^{+}}(s)[1-j_{\Gamma^{+}}(t-s)j_{\Gamma^{+}}(t-s)^{*}]j_{\Gamma^{+}}(s-x)^{*} & \mbox{ for } x\le s< t \\
j_{\Gamma^{+}}(x)[1-j_{\Gamma^{+}}(t-x)j_{\Gamma^{+}}(t-x)^{*}] & \quad \quad s<x <t \\
0  & \mbox{ for } t=x \mbox{ or } s<t<x,
\end{array}
\right.
\end{align*}
which implies that $\eta\in {\mathcal I}$.
\end{proof}

\bigskip
An isometric crossed product is isomorphic to a full corner in the ordinary crossed product by the dilated action.
The action $\tau:\Gamma^{+}\rightarrow\End(B_{\Gamma^{+}})$ is dilated to the action $\tau:\Gamma\rightarrow \Aut(B_{\Gamma})$ where $\tau_{s}(1_{x})=1_{x+s}$ acts on the algebra $B_{\Gamma}=\overline{\newspan}\{1_{x} : x \in\Gamma\}$.
We refer to Lemma 3.2 of \cite{Adji2} to see that a dilation of $(B_{\Gamma^{+}}\otimes A,\Gamma^{+},\gamma)$
gives the system $(B_{\Gamma}\otimes A,\Gamma,\gamma_{\infty})$, in which
$\gamma_{\infty}=\tau\otimes\alpha^{-1}$ acts by automorphisms on the algebra $B_{\Gamma}\otimes A$.
The bonding homomorphism $h_{s}$ for $s\in\Gamma^{+}$, is given by
\[ h_{s}: (1_{x}\otimes a)\in B_{\Gamma^{+}}\otimes A ~ \mapsto ~ (1_{x}\otimes a)\in \overline{\newspan}\{1_{y}: y\ge -s\}\otimes A
\hookrightarrow B_{\Gamma}\otimes A. \]
This homomorphism extends to the multiplier algebras, we write as $\overline{h}_{0}$, and it carries the identity
$1_{0}\otimes 1_{M(A)} \in M(B_{\Gamma^{+}}\otimes A)$ into the projection
$\overline{h}_{0}(1_{0}\otimes 1_{M(A)}) \in M(B_{\Gamma}\otimes A)$.
Let
\[ p:=\overline{j}_{B_{\Gamma}\otimes A}(\overline{h}_{0}(1_{0}\otimes 1_{M(A)})) \]
be the projection in the crossed product $M((B_{\Gamma}\otimes A)\times_{\gamma_{\infty}}\Gamma)$.
Then it follows from \cite[Theorem 2.4]{Adji1} or \cite[Theorem 2.4]{Laca} that $(B_{\Gamma^{+}}\otimes A) \times_{\gamma}^{\iso} \Gamma^{+}$ is isomorphic onto
the full corner $p~[(B_{\Gamma}\otimes A)\times_{\gamma_{\infty}}\Gamma]~p$.

\begin{cor}\label{morita}
There is an isomorphism of $A\times_{\alpha}^{\piso}\Gamma^{+}$ onto the full corner
$p~[(B_{\Gamma}\otimes A)\times_{\gamma_{\infty}}\Gamma]~p$ of the crossed product $(B_{\Gamma}\otimes A)\times_{\gamma_{\infty}}\Gamma$, such that
the ideal $\ker\phi$ of $A\times_{\alpha}^{\piso}\Gamma^{+}$ in Proposition \ref{kernel-piso-iso} is isomorphic onto
the ideal $p~[(B_{\Gamma,\infty}\otimes A)\times_{\gamma_{\infty}}\Gamma]~p$, where
$B_{\Gamma,\infty}=\overline{\newspan}\{1_{s}-1_{t} : s<t \in\Gamma\}$.
\end{cor}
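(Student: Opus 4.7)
The plan is to chain together two isomorphisms already at hand. First, Proposition~\ref{piso-iso} supplies an isomorphism
\[ k: A\times_{\alpha}^{\piso}\Gamma^{+} \longrightarrow (B_{\Gamma^{+}}\otimes A)\times_{\gamma}^{\iso}\Gamma^{+}, \]
and Lemma~\ref{ker-iso} identifies $k(\ker\phi)$ with the ideal $(B_{\Gamma^{+},\infty}\otimes A)\times_{\gamma}^{\iso}\Gamma^{+}$. Second, since $\gamma=\tau\otimes\alpha^{-1}$ is built from $\tau\in\End(B_{\Gamma^{+}})$ and the automorphism $\alpha^{-1}$, the standard dilation described just before the statement (cf.\ \cite[Lemma~3.2]{Adji2}) produces the system $(B_{\Gamma}\otimes A,\Gamma,\gamma_{\infty})$ with $\gamma_{\infty}=\tau\otimes\alpha^{-1}$ acting by automorphisms. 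By \cite[Theorem~2.4]{Adji1} (equivalently \cite[Theorem~2.4]{Laca}) this dilation induces an isomorphism
\[ D: (B_{\Gamma^{+}}\otimes A)\times_{\gamma}^{\iso}\Gamma^{+} \longrightarrow p\,\bigl[(B_{\Gamma}\otimes A)\times_{\gamma_{\infty}}\Gamma\bigr]\,p \]
onto the full corner cut by $p=\overline{j}_{B_{\Gamma}\otimes A}(\overline{h}_{0}(1_{0}\otimes 1_{M(A)}))$. Composing $D\circ k$ yields the first claimed isomorphism.

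For the statement about $\ker\phi$, I would trace through the ideal under $D$. The point is that $B_{\Gamma^{+},\infty}\otimes A$ is an extendible $\gamma$-invariant ideal of $B_{\Gamma^{+}}\otimes A$, and its dilation inside $(B_{\Gamma}\otimes A,\Gamma,\gamma_{\infty})$ is precisely $B_{\Gamma,\infty}\otimes A$: indeed, the bonding maps $h_{s}$ send $1_{x}\otimes a$ to $1_{x}\otimes a$ viewed inside $B_{\Gamma}\otimes A$, so they send $(1_{x}-1_{y})\otimes a$ with $x<y$ in $\Gamma^{+}$ to the corresponding generator of $B_{\Gamma,\infty}\otimes A$, and translating by $\Gamma$ fills out all the generators $(1_{s}-1_{t})\otimes a$ with $s<t\in\Gamma$. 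The naturality of the corner realization \cite[Theorem~2.4]{Adji1} applied to the invariant ideal then gives that $D$ carries $(B_{\Gamma^{+},\infty}\otimes A)\times_{\gamma}^{\iso}\Gamma^{+}$ isomorphically onto $p\,[(B_{\Gamma,\infty}\otimes A)\times_{\gamma_{\infty}}\Gamma]\,p$. Combining with Lemma~\ref{ker-iso} finishes the identification of $\ker\phi$.

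The only substantive technical point is the matching of the dilated ideal with $B_{\Gamma,\infty}\otimes A$; this is where I expect to spend the most effort, since one must verify that the universal properties of the dilations of an ideal and of the ambient algebra fit into a commutative square so that the inclusion $(B_{\Gamma^{+},\infty}\otimes A)\times_{\gamma}^{\iso}\Gamma^{+}\hookrightarrow (B_{\Gamma^{+}}\otimes A)\times_{\gamma}^{\iso}\Gamma^{+}$ intertwines $D$ with the inclusion of the corresponding corners. Once this naturality is in hand, the rest is bookkeeping: the full corner property is inherited from $D$, the ideal identification from Lemma~\ref{ker-iso}, and the isomorphism $A\times_{\alpha}^{\piso}\Gamma^{+}\simeq p[(B_{\Gamma}\otimes A)\times_{\gamma_{\infty}}\Gamma]p$ from the composition $D\circ k$.
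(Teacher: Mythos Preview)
Your proposal is correct and matches the paper's approach exactly: the paper states Corollary~\ref{morita} without an explicit proof, relying on the preceding discussion (Proposition~\ref{piso-iso}, Lemma~\ref{ker-iso}, and the dilation/corner isomorphism from \cite{Adji1,Laca}), which is precisely the composition $D\circ k$ you describe. The technical point you flag about identifying the dilation of $B_{\Gamma^{+},\infty}\otimes A$ with $B_{\Gamma,\infty}\otimes A$ is left implicit in the paper as well.
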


\begin{cor}\label{trivial-action}
Suppose $\alpha:\Gamma^{+}\rightarrow {\rm Aut}(A)$ is the trivial action $\alpha_{x}={\rm identity}$ for all $x$, and let
${\mathcal C}_{\Gamma}$ denote the commutator ideal of the Toeplitz algebra ${\mathcal T}(\Gamma)$.
Then there is a short exact sequence
\begin{equation}\label{diagram2}
\begin{diagram}\dgARROWLENGTH=0.5\dgARROWLENGTH
\node{0} \arrow{e}
\node{A\otimes {\mathcal C}_{\Gamma}} \arrow{e}
\node{A\times_{\alpha}^{\piso}\Gamma^{+}} \arrow{e,t}{\phi}
\node{A\times_{\alpha}\Gamma} \arrow{e}
\node{0.}
\end{diagram}
\end{equation}
\end{cor}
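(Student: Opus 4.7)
The plan is to assemble the desired exact sequence from (\ref{xact}) by identifying the two outer terms. Since the trivial action $\alpha$ is in particular an action by automorphisms, the preliminaries give $A \times_{\alpha}^{\iso}\Gamma^{+} \cong A \times_{\alpha}\Gamma$, so the right end of (\ref{xact}) is already what we want. The entire task reduces to identifying $\ker\phi$ with $A\otimes\mathcal{C}_{\Gamma}$.

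For this I would apply Corollary \ref{morita}. Triviality of $\alpha$ forces $\gamma = \tau\otimes\id_{A}$ on $B_{\Gamma^{+}}\otimes A$ and $\gamma_{\infty}=\tau_{\infty}\otimes\id_{A}$ on $B_{\Gamma}\otimes A$, where $\tau_{\infty}$ is translation on $B_{\Gamma}$. Corollary \ref{morita} then reads
\[ \ker\phi \;\cong\; p\bigl[(B_{\Gamma,\infty}\otimes A)\times_{\tau_{\infty}\otimes\id}\Gamma\bigr] p. \]
Because the $\Gamma$-action on the second tensor factor is trivial, the standard tensor-product theorem for group crossed products yields
\[ (B_{\Gamma,\infty}\otimes A)\times_{\tau_{\infty}\otimes\id}\Gamma \;\cong\; (B_{\Gamma,\infty}\times_{\tau_{\infty}}\Gamma)\otimes A, \]
and the corner projection $p=\overline{j}_{B_{\Gamma}\otimes A}(\overline{h}_{0}(1_{0}\otimes 1_{M(A)}))$ factorises as $p_{0}\otimes 1_{M(A)}$, where $p_{0}:=\overline{j}_{B_{\Gamma}}(\overline{h}_{0}(1_{0}))\in M(B_{\Gamma}\times_{\tau_{\infty}}\Gamma)$. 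Combining these gives
\[ \ker\phi \;\cong\; \bigl(p_{0}\,[B_{\Gamma,\infty}\times_{\tau_{\infty}}\Gamma]\,p_{0}\bigr)\otimes A. \]

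It remains to identify $p_{0}[B_{\Gamma,\infty}\times_{\tau_{\infty}}\Gamma]p_{0}$ with $\mathcal{C}_{\Gamma}$. The dilation theory of \cite{Adji1,Laca} applied to the distinguished system $(B_{\Gamma^{+}},\Gamma^{+},\tau)$ gives $p_{0}[B_{\Gamma}\times_{\tau_{\infty}}\Gamma]p_{0}\cong B_{\Gamma^{+}}\times_{\tau}^{\iso}\Gamma^{+}$, which in turn is isomorphic to $\T(\Gamma)$ by the ALNR isomorphism recalled in the preliminaries; the extendible ideal $B_{\Gamma,\infty}$ restricts to $B_{\Gamma^{+},\infty}$ under this corner realisation, so $p_{0}[B_{\Gamma,\infty}\times_{\tau_{\infty}}\Gamma]p_{0}\cong B_{\Gamma^{+},\infty}\times_{\tau}^{\iso}\Gamma^{+}$ as an ideal of $\T(\Gamma)$. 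To see that this ideal is exactly $\mathcal{C}_{\Gamma}$, take quotients: $B_{\Gamma^{+}}/B_{\Gamma^{+},\infty}\cong \C$ with trivial (hence automorphic) action, so the quotient isometric crossed product is $\C\times^{\iso}\Gamma^{+}\cong C^{*}(\Gamma)$, matching the quotient $\T(\Gamma)/\mathcal{C}_{\Gamma}\cong C^{*}(\Gamma)$.

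The main obstacle is the tensor-product identification of the group crossed product with trivial action on the $A$-factor, together with the claim that the corner projection $p$ splits as $p_{0}\otimes 1$; one has to check that the full-corner realisation of Corollary \ref{morita} is compatible with tensoring by $(A,\id)$. This can be done either by invoking a general tensor-product theorem and tracking the projection, or directly from universal properties, since a covariant isometric representation of $(B_{\Gamma}\otimes A,\Gamma,\tau_{\infty}\otimes\id)$ is the same as a commuting pair consisting of a non-degenerate representation of $A$ and a covariant isometric representation of $(B_{\Gamma},\Gamma,\tau_{\infty})$.
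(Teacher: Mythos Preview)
Your argument is correct, but it takes a longer path than the paper's. The paper stops one step earlier in the chain of results: instead of invoking Corollary~\ref{morita}, it uses Lemma~\ref{ker-iso} directly, which already identifies $\ker\phi$ with the \emph{isometric} crossed product $(B_{\Gamma^{+},\infty}\otimes A)\times_{\tau\otimes\id}^{\iso}\Gamma^{+}$. Then it appeals to an isometric-crossed-product analogue of \cite[Lemma~2.75]{Dana}, namely $(C\otimes_{\max}D)\times_{\gamma\otimes\id}^{\iso}\Gamma^{+}\cong(C\times_{\gamma}^{\iso}\Gamma^{+})\otimes_{\max}D$, applied with $C=B_{\Gamma^{+},\infty}$ and $D=A$; this gives $\ker\phi\cong(B_{\Gamma^{+},\infty}\times_{\tau}^{\iso}\Gamma^{+})\otimes A\cong\mathcal{C}_{\Gamma}\otimes A$ in one line, with no corner projection to track. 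Your route pushes forward to the group crossed product via Corollary~\ref{morita}, uses the standard group-crossed-product tensor theorem, and then has to verify that $p$ splits as $p_{0}\otimes 1$ and that the corner of the ideal matches $\mathcal{C}_{\Gamma}$. The advantage of your approach is that the group tensor theorem is textbook, whereas the isometric version the paper quotes is less standard; the cost is the extra bookkeeping with the projection. One small point: in your final step, matching the \emph{quotients} $\T(\Gamma)/J\cong C^{*}(\Gamma)\cong\T(\Gamma)/\mathcal{C}_{\Gamma}$ does not by itself force $J=\mathcal{C}_{\Gamma}$; you should observe that both quotient maps send $T_{s}\mapsto u_{s}$, so their kernels coincide (or simply note that under the ALNR isomorphism the spanning elements $j_{\Gamma^{+}}(r)^{*}j_{B_{\Gamma^{+}}}(1_{s}-1_{t})j_{\Gamma^{+}}(u)$ of $B_{\Gamma^{+},\infty}\times_{\tau}^{\iso}\Gamma^{+}$ go to $T_{r}(T_{s}T_{s}^{*}-T_{t}T_{t}^{*})T_{u}^{*}$, which span $\mathcal{C}_{\Gamma}$).
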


\begin{proof}
We have already identified in Lemma \ref{ker-iso} that the ideal $\I$ is $(B_{\Gamma^{+},\infty}\otimes A)\times_{\tau\otimes{\rm id}}^{\iso}\Gamma^{+}$.
We know that we have a version of \cite[Lemma 2.75]{Dana} for isometric crossed product, which says that if $(C,\Gamma^{+},\gamma)$ is a dynamical system and $D$ is any $C^*$-algebra, then $(C\otimes_{\max}D)\times_{\gamma\otimes{\rm id}}^{\iso}\Gamma^{+}$ is isomorphic to $(C\times_{\gamma}^{\iso}\Gamma^{+})\otimes_{\max} D$.
Applying this to the system $(B_{\Gamma^{+},\infty},\Gamma^{+},\tau)$ and the $C^*$-algebra $A$,  we get
\[ (B_{\Gamma^{+},\infty}\otimes A)\times_{\tau\otimes{\rm id}}^{\iso}\Gamma^{+}\simeq (B_{\Gamma^{+},\infty}\times_{\tau}^{\iso}\Gamma^{+})\otimes A\simeq {\mathcal C}_{\Gamma}\otimes A, \]
and hence we obtained the exact sequence.
\end{proof}

\begin{remark}
Note that
\[ A\times_{\rm id}^{\piso}\Gamma^{+}\simeq (B_{\Gamma^{+}}\otimes A)\times_{\tau\otimes{\rm id}}^{\iso}\Gamma^{+}\simeq (B_{\Gamma^{+}}\times_{\tau}^{\iso}\Gamma^{+})\otimes A\simeq \T(\Gamma)\otimes A, \]
and $A\times_{\rm id}^{\iso}\Gamma^{+}\simeq A\times_{\rm id}\Gamma \simeq A\otimes C^{*}(\Gamma)\simeq A\otimes C(\hat{\Gamma})$.
So (\ref{diagram2}) is the exact sequence
\[
\begin{diagram}\dgARROWLENGTH=0.5\dgARROWLENGTH
\node{0} \arrow{e}
\node{A\otimes {\mathcal C}_{\Gamma}} \arrow{e}
\node{A\otimes \T(\Gamma)} \arrow{e,t}{\phi}
\node{A\otimes C(\hat{\Gamma})}  \arrow{e}
\node{0,}
\end{diagram}
\]
which is the (maximal) tensor product with the algebra $A$ to the well-known exact sequence
$0\rightarrow {\mathcal C}_{\Gamma}\rightarrow \T(\Gamma)\rightarrow C(\hat{\Gamma})\rightarrow 0$.
\end{remark}

\subsection{The extension of Pimsner Voiculescu}
Consider a system $(A,\Gamma^{+},\alpha)$ in which every $\alpha_{x}$ is an automorphism of $A$.
Let $(A\times_{\alpha}\Gamma,j_{A},j_{\Gamma})$ be the corresponding group crossed product.
The Toeplitz algebra $\T(\Gamma)$  is the $C^{*}$-algebra  generated by semigroup $\{T_{x}: x\in\Gamma^{+}\}$ of non unitary isometries $T_{x}$, and the commutator ideal ${\mathcal C}_{\Gamma}$ of $\T(\Gamma)$ generated by the elements $T_{s}T_{s}^{*}-T_{t}T_{t}^{*}$ for $s<t$ is given by
$\overline{\newspan}\{T_{r}(1-T_{u}T_{u}^{*})T_{t}^{*}: r,u,t\in\Gamma^{+}\}$ of $\T(\Gamma)$.

Consider the $C^*$-subalgebra $\T_{PV}(\Gamma)$ of $M((A\times_{\alpha}\Gamma)\otimes \T(\Gamma))$ generated by $\{j_{A}(a)\otimes I: a\in A\}$ and
$\{j_{\Gamma}(x)\otimes T_{x} : x\in\Gamma^{+}\}$.
Let ${\mathcal S}(\Gamma)$ be the ideal of $\T_{PV}(\Gamma)$ generated by
$\{j_{A}(a)\otimes  (T_{s}T_{s}^{*}-T_{t}T_{t}^{*}): s<t \in\Gamma^{+},a\in A\}$.

We claim that $(A\times_{\alpha^{-1}}^{\piso}\Gamma^{+},i_{A},i_{\Gamma^{+}}) \simeq \T_{PV}(\Gamma)$, and the isomorphism takes the ideal $\ker(\phi)$ onto
${\mathcal S}(\Gamma)$.
To see this, let $\pi(a):=j_{A}(a)\otimes I$ and $V_{x}:=j_{\Gamma}(x)^{*}\otimes T_{x}^{*}$.
Then $(\pi,V)$ is a partial-isometric covariant representation of $(A,\Gamma^{+},\alpha^{-1})$ in the $C^*$-algebra
$M((A\times_{\alpha}\Gamma)\otimes\T(\Gamma))$.
So we have a homomorphism
$\psi:A\times_{\alpha^{-1}}^{\piso}\Gamma^{+}\rightarrow (A\times_{\alpha}\Gamma)\otimes\T(\Gamma)$ such that
\[
\psi( i_{A}(a)) = j_{A}(a)\otimes I \mbox{ and }
\overline{\psi}(i_{\Gamma^{+}}(x))=j_{\Gamma}(x)^{*}\otimes T_{x}^{*} \mbox{ for } a \in A, x\in\Gamma^{+}.
\]
Moreover for $a\in A$ and $x>0$, we have
\begin{eqnarray*}
\pi(a)(1-V_{x}^{*}V_{x}) & = & (j_{A}(a)\otimes I)(1-(j_{\Gamma}(x)\otimes T_{x})(j_{\Gamma}(x)^{*}\otimes T_{x}^{*})) \\
 & = & (j_{A}(a)\otimes I) - (j_{A}(a)\otimes I)(j_{\Gamma}(x)\otimes T_{x})(j_{\Gamma}(x)^{*}\otimes T_{x}^{*}) \\
 & = & (j_{A}(a)\otimes I) - (j_{A}(a)\otimes T_{x}T_{x}^{*}) \\
 & = & j_{A}(a)\otimes (I-T_{x}T_{x}^{*}).
\end{eqnarray*}
Since $T_{x}T_{x}^{*}\neq I$, the equation $\pi(a)(1-V_{x}^{*}V_{x})=0$ must imply $j_{A}(a)=0$ in $A\times_{\alpha}\Gamma$, and hence $a=0$ in $A$.
So by Theorem 4.8 \cite{LR}  the homomorphism $\psi$ is faithful.
Thus $A\times_{\alpha^{-1}}^{\piso}\Gamma^{+}\simeq \psi(A\times_{\alpha^{-1}}^{\piso}\Gamma^{+})=\T_{PV}(\Gamma)$.

The isomorphism $\psi:A\times_{\alpha^{-1}}^{\piso}\Gamma^{+}\rightarrow \T_{PV}(\Gamma)$ takes the ideal $\ker \phi$ of $A\times_{\alpha^{-1}}\Gamma^{+}$ to the algebra ${\mathcal S}(\Gamma)$.

\begin{cor}[The extension of Pimsner and Voiculescu] \hspace{3cm}\\
Let $(A,\N,\alpha)$ be a system in which $\alpha\in\Aut(A)$.
Then there is an exact sequence $0\rightarrow A\otimes \K(\ell^{2}(\N))\rightarrow \T_{PV}\rightarrow A\times_{\alpha}\Z\rightarrow 0$.
\end{cor}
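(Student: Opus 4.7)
The plan is to combine three pieces already established in the paper: the isomorphism $\psi:A\times_{\alpha^{-1}}^{\piso}\N \stackrel{\simeq}{\to} \T_{PV}(\N)$ constructed in the preceding discussion, the short exact sequence of Theorem \ref{ext} applied to the system $(A,\N,\alpha^{-1})$, and the standard identification of the isometric crossed product by an automorphism with the group crossed product.

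First I would apply Theorem \ref{ext} to $(A,\N,\alpha^{-1})$, which gives the short exact sequence
\[
0 \longrightarrow p\,[\K(\ell^{2}(\N,A))]\,p \stackrel{\Psi}{\longrightarrow} A\times_{\alpha^{-1}}^{\piso}\N \stackrel{\phi}{\longrightarrow} A\times_{\alpha^{-1}}^{\iso}\N \longrightarrow 0,
\]
where $p = \overline{\pi}_{\alpha^{-1}}(1_{M(A)})$. Because every $\alpha^{-1}_{n}$ is an automorphism of $A$, its strict extension satisfies $\overline{\alpha^{-1}}_{n}(1_{M(A)}) = 1_{M(A)}$ for every $n$, so in fact $p$ acts as the identity on $\ell^{2}(\N,A)$. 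Hence $p\,[\K(\ell^{2}(\N,A))]\,p = \K(\ell^{2}(\N,A))$, and using the standard Hilbert-module identification $\ell^{2}(\N,A)\simeq \ell^{2}(\N)\otimes A$ recalled in the remark following Theorem \ref{ext}, the left-hand term becomes $A\otimes \K(\ell^{2}(\N))$.

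Next I would identify the quotient. Since $\alpha^{-1}$ is an action by automorphisms, each generating isometry in any covariant isometric representation of $(A,\N,\alpha^{-1})$ is forced to be a unitary, and the universal property yields $A\times_{\alpha^{-1}}^{\iso}\N \simeq A\times_{\alpha^{-1}}\Z$ (this observation is recorded in the preliminaries). The automorphism of $\Z$ given by $n\mapsto -n$ then produces $A\times_{\alpha^{-1}}\Z \simeq A\times_{\alpha}\Z$, since the two systems differ only by this relabelling of the acting group. Transporting the exact sequence along $\psi$ on the middle term and along these two identifications on the flanking terms yields the desired
\[
0 \longrightarrow A\otimes \K(\ell^{2}(\N)) \longrightarrow \T_{PV} \longrightarrow A\times_{\alpha}\Z \longrightarrow 0.
\]

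The assembly is essentially bookkeeping once the pieces are in place; the only point that requires mild care is checking that the transported maps do what they should, in particular that the composition $\psi\circ\Psi$ sends a spanning element of $\K(\ell^{2}(\N,A))$ to the corresponding element $j_{A}(a)\otimes (T_{m}T_{m}^{*}-T_{m+1}T_{m+1}^{*})\cdots$ of the commutator-type ideal $\mathcal{S}(\N)$ inside $\T_{PV}$, and that the quotient map $\T_{PV}\to A\times_{\alpha}\Z$ induced by $\phi$ coincides with the canonical one that kills $\mathcal{S}(\N)$ and sends $j_{\Gamma}(x)\otimes T_{x}$ to $j_{\Gamma}(x)$. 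This last compatibility is the main thing to verify explicitly, but it reduces to tracing the definitions of $\psi$, $\phi$ and the extension $\overline{\psi}$ on the multiplier algebra, which is straightforward from the formulas $\psi(i_{A}(a)) = j_{A}(a)\otimes I$ and $\overline{\psi}(i_{\Gamma^{+}}(x)) = j_{\Gamma}(x)^{*}\otimes T_{x}^{*}$ already derived.
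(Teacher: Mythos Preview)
Your proposal is correct and follows essentially the same route as the paper's proof: apply Theorem~\ref{ext} to $(A,\N,\alpha^{-1})$, use that $p=1$ because automorphisms are unital on multipliers so the left term is $\K(\ell^{2}(\N,A))\simeq A\otimes\K(\ell^{2}(\N))$, identify the right term via $A\times_{\alpha^{-1}}^{\iso}\N\simeq A\times_{\alpha^{-1}}\Z\simeq A\times_{\alpha}\Z$, and transport the middle term to $\T_{PV}$ through the isomorphism $\psi$. Your additional remarks about tracing the transported maps on spanning elements are more explicit than the paper's one-line proof, but the underlying argument is the same.
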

\begin{proof}
Apply Theorem \ref{ext} to the system $(A,\N,\alpha^{-1})$, and then use the identifications $A\times_{\alpha^{-1}}^{\piso}\N\simeq \T_{PV}(\Z)$,
$\ker\phi\simeq {\mathcal S}(\Gamma)\simeq \K(\ell^{2}(\N,A))$ and $A\times_{\alpha} \Z\simeq A\times_{\alpha^{-1}} \Z$.
\end{proof}


\begin{thebibliography}{99}
\bibitem{Adji1} S. Adji, `Invariant ideals of crossed products by semigroups of endomorphisms',
{\it Proc. of conference in Functional Analysis and Global Analysis in Manila} (Oct 1996), Springer-Verlag Singapore, pp 1-8.

\bibitem{Adji2} S. Adji, `A remark on semigroup crossed products',
{\it Vietnam Journal of Math.}  31:4 (2003), 429-435.

\bibitem{Adji} S. Adji, `Semigroup crossed products and the structure of Toeplitz algebras',
{\it J.Operator Thoery} 44 (2000), 139-150.

\bibitem{ALNR} S. Adji, M. Laca, M. Nilsen and I. Raeburn, `Crossed products by semigroups of endomorphisms and the Toeplitz algebras of ordered groups',
{\it Proc. Amer. Math. Soc.} Vol.122, Number 4 (1994), 1133-1141.

\bibitem{AH} S. Adji and A. Hosseini, `The partial-isometric crossed products of $c_{0}$ by the forward and backward shifts',
{\it Bull. Malays. Math. Sci. Soc.} (2), 33(3), 2010, 487-498.

\bibitem{F} N. J. Fowler, `Discrete product systems of Hilbert bimodules', {\it Pacific J. Math.} 204 (2002), 335-375.

\bibitem{KS} M. Khoshkam and G. Skandalis, `Toeplitz algebras associated with endomorphisms and Pimsner-Voiculescu
exact sequences', {\it Pacific Journal of Mathematics} Vol 181 No 2 1997, 315-331.

\bibitem{Laca} M. Laca, `From endomorphisms to automorphisms and back: dilations and full corners',
{\it J. London Math. Soc.} (2) 61 (2000), 893-904.

\bibitem{Larsen} N. S. Larsen, `Nonunital semigroup crossed products', 
{\it Math. Proc. Royal Irish Acad.} 100A (2000), 205-218.

\bibitem{LR} J. Lindiarni and I. Raeburn, `Partial-isometric crossed products by semigroups of endomorphisms', 
{\it J. Operator Theory} 52 (2004), 61-87.

\bibitem{murphy} G.J. Murphy, `Ordered groups and Toeplitz algebras`, {\it J. Operator Theory} 18 (1987), 303-326.

\bibitem{murphy2} G.J. Murphy, `Ordered groups and crossed products of $C^*$-algebras', {\it Pacific J. Math.} 148 (1991), 319-349.

\bibitem{murphy3} G.J. Murphy, `Crossed products of $C^*$-algebras by semigroups of automorphisms', {\it Proc. London Math. Soc.} (3) 68  (1994), 423-448.

\bibitem{PV} M. Pimsner and D. Voiculescu, `Exact sequences for $K$-groups and $Ext$-groups of certain cross-product $C^*$-algebras', 
{\it J. Operator Theory} 4 (1980) No. 1, 93-118.

\bibitem{RW} I. Raeburn and D. P. Williams, {\it Morita equivalence and continuous-trace $C^*$-algebras}, AMS Mathematical Surveys and Monographs No.60, 1998.

\bibitem{stacey} P.J. Stacey, `Crossed products of $C^{*}$-algebras by $*$-endomorphisms',
{\it J. Austral. Math. Soc.} (Series A) 54 (1993), 204-212.

\bibitem{Dana} D. P. Williams, \emph{Crossed Products of $C^*$-Algebras}, AMS Mathematical Surveys and Monographs No.134, 2007.

\end{thebibliography}
\end{document}